\numberwithin{equation}{section}
\theoremstyle{plain}
\newtheorem*{problem}{Qi's problem}
\newtheorem{thm}{Theorem}[section]
\newtheorem{lem}{Lemma}[section]
\newtheorem{cor}{Corollary}[section]
\theoremstyle{remark}
\newtheorem{rem}{Remark}[section]
\DeclareMathOperator{\te}{e}
\begin{document}

\title[Monotonicity for ratios of Bernoulli polynomials]
{Monotonicity and inequalities for ratios of Bernoulli polynomials and numbers}

\author[Z.-H. Yang]{Zhen-Hang Yang}
\address{Department of Science and Technology, Research Institute of State Grid Zhejiang Electric Power Company, Hangzhou, Zhejiang, 310014, China}
\email{yzhkm@163.com}
\urladdr{\url{https://orcid.org/0000-0002-2719-4728}}

\author[F. Qi]{Feng Qi*}
\address{School of Mathematics and Physics, Hulunbuir University, Hailar, Inner Mongolia, 021008, China;
17709 Sabal Court, University Village, Dallas, TX 75252-8024, USA}
\email{\href{mailto: F. Qi <qifeng618@gmail.com>}{qifeng618@gmail.com}}
\urladdr{\url{https://orcid.org/0000-0001-6239-2968}}

\begin{abstract}
In the work, the authors establish the monotonicity of the ratios
$$
\frac{B_{2\ell-1}(s)}{B_{2\ell+1}(s)}, \quad
\frac{B_{2\ell}(s)}{B_{2\ell+1}(s)}, \quad
\frac{B_{2m}(s)}{B_{2\ell}(s)}, \quad
\frac{B_{2\ell}(s)}{B_{2\ell-1}(s)},
$$
where $B_\ell(s)$ denotes the Bernoulli polynomials. Using these newly established monotonicity properties, the authors derive several new inequalities and also recover a number of known inequalities involving the Bernoulli polynomials $B_\ell(s)$, the Bernoulli numbers $B_{2\ell}$, and their ratios such as $\frac{B_{2\ell+2}}{B_{2\ell}}$.
\end{abstract}

\subjclass{Primary 11B68; Secondary 26D05, 41A60}

\keywords{Qi's problem; solution; Bernoulli polynomial; Bernoulli number; ratio; monotonicity; inequality; monotonicity rule}

\thanks{*Corresponding author: Feng Qi, qifeng618@gmail.com}

\thanks{This paper was typeset using \AmS-\LaTeX}

\maketitle

\section{Motivations}

Throughout this paper, we use the notation $\mathbb{N}=\{1,2,\dotsc\}$ for the set of positive integers and $\mathbb{N}_0=\{0\}\cup\mathbb{N}$ for the set of nonnegative integers.
\par
It is well known~\cite[p.~3]{Temme-96-book} that the classical Bernoulli polynomials $B_\ell(s)$ for $\ell\in\mathbb{N}_0$ are generated by
\begin{equation*}
\frac{z\te^{s z}}{\te^z-1}=\sum_{\ell=0}^{\infty}B_\ell(s)\frac{z^\ell}{\ell!}, \quad |z|<2\pi
\end{equation*}
and $B_\ell(0)=B_\ell$ are the Bernoulli numbers, which satisfy
$$
B_0=1,\quad
B_1=-\frac12,\quad
B_{2\ell}\neq0,\quad
B_{2\ell+1}=0,
$$
for all $\ell\in\mathbb{N}$.
\par
On 2 January 2024, Qi posed on MathOverflow (\url{https://mathoverflow.net/q/461427}) the problem of investigating the monotonicity of the ratio $\frac{B_{2\ell+1}(s)}{B_{2\ell+3}(s)}$ for $\ell\in\mathbb{N}_0$.

\begin{problem}
The ratio $\bigl|\frac{B_{2\ell+1}(s)}{B_{2\ell+3}(s)}\bigr|$ for $\ell\in\mathbb{N}_0$ is decreasing in $s\in\bigl(0,\frac{1}{2}\bigr)$ and increasing in $s\in\bigl(\frac12,1\bigr)$.
\end{problem}
\par
On 3 January 2024, Yang communicated with Qi via WeChat and provided a solution to Qi's problem, which is essentially the original version of the proof presented in~\cite[Proposition~1]{Bernouli-No-Tail.tex}.

\begin{thm}[{\cite[Proposition~1]{Bernouli-No-Tail.tex}}]\label{bernou-ratio-assum}
The ratio $\frac{B_{2\ell-1}(s)}{B_{2\ell+1}(s)}$ for $\ell\in\mathbb{N}$ increases in $s\in\bigl(0,\frac{1}{2}\bigr)$ and decreases in $s\in\bigl(\frac{1}{2},1\bigr)$.
\end{thm}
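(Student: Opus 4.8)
The plan is to sidestep the naive L'Hôpital reduction and rewrite the ratio as a ratio of two moments of a positive kernel, to which a monotonicity rule for parametrized integrals applies. By the reflection formula $B_m(1-t)=(-1)^mB_m(t)$ one has $\frac{B_{2n-1}(1-t)}{B_{2n+1}(1-t)}=\frac{B_{2n-1}(t)}{B_{2n+1}(t)}$, so the ratio is symmetric about $t=\frac12$ and it suffices to show it is increasing on $\bigl(0,\frac12\bigr)$. The tempting route --- both $B_{2n-1}$ and $B_{2n+1}$ vanish at $t=\frac12$, so invoke the monotone form of L'Hôpital's rule, which would reduce monotonicity of $\frac{B_{2n-1}}{B_{2n+1}}$ to that of $\frac{B_{2n-1}'}{B_{2n+1}'}=\frac{2n-1}{2n+1}\cdot\frac{B_{2n-2}(t)}{B_{2n}(t)}$ --- is \emph{blocked}, because $B_{2n}$ has a zero inside $\bigl(0,\frac12\bigr)$, so that reduced quotient is not even continuous there and the rule does not apply. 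Overcoming this obstruction is the crux, and it motivates the following detour.

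From the classical Fourier expansion $B_{2m-1}(t)=\frac{(-1)^m2(2m-1)!}{(2\pi)^{2m-1}}\sum_{k\ge1}\frac{\sin2\pi kt}{k^{2m-1}}$ (valid on $(0,1)$), together with $k^{-(2m-1)}=\frac{1}{(2m-2)!}\int_0^\infty x^{2m-2}\te^{-kx}\,\td x$ and the summation $\sum_{k\ge1}\te^{-kx}\sin k\phi=\frac{\sin\phi}{2(\cosh x-\cos\phi)}$, one obtains for $t\in\bigl(0,\frac12\bigr)$
\begin{equation*}
B_{2m-1}(t)=\frac{(-1)^m(2m-1)}{(2\pi)^{2m-1}}\int_0^\infty x^{2m-2}K(x,t)\,\td x,\qquad
K(x,t)=\frac{\sin2\pi t}{\cosh x-\cos2\pi t}>0,
\end{equation*}
the exchange of $\sum$ and $\int$ being legitimate by absolute convergence (and for $n=1$ one works on the open interval, or simply uses $\frac{B_1(t)}{B_3(t)}=\frac1{t(t-1)}$). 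Setting $M_j(t)=\int_0^\infty x^jK(x,t)\,\td x>0$, this gives
\begin{equation*}
\frac{B_{2n-1}(t)}{B_{2n+1}(t)}=-\frac{4\pi^2(2n-1)}{2n+1}\cdot\frac{M_{2n-2}(t)}{M_{2n}(t)},
\end{equation*}
so that $\frac{B_{2n-1}(t)}{B_{2n+1}(t)}<0$ on $\bigl(0,\frac12\bigr)$ and the theorem is equivalent to $\frac{M_{2n}(t)}{M_{2n-2}(t)}$ being increasing in $t$ there.

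To finish, I would invoke the monotonicity rule for the ratio of two integrals sharing a parameter. The kernel $K$ is log-supermodular: $\frac{\partial^2}{\partial x\,\partial t}\ln K(x,t)=\frac{2\pi\sin2\pi t\,\sinh x}{(\cosh x-\cos2\pi t)^2}>0$ on $(0,\infty)\times\bigl(0,\frac12\bigr)$; equivalently, for $t_1<t_2$ the ratio $\frac{K(x,t_2)}{K(x,t_1)}$ is increasing in $x$, since the $x$-derivative of $\frac{\cosh x-\cos2\pi t_1}{\cosh x-\cos2\pi t_2}$ equals $\frac{(\cos2\pi t_1-\cos2\pi t_2)\sinh x}{(\cosh x-\cos2\pi t_2)^2}>0$ because $\cos$ decreases on $(0,\pi)$. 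Since $x\mapsto x^{2n}/x^{2n-2}=x^2$ is increasing, differentiating $\frac{M_{2n}}{M_{2n-2}}$ in $t$ and symmetrizing the numerator casts it as
\begin{equation*}
\tfrac12\int_0^\infty\!\!\int_0^\infty (x^2-y^2)\Bigl[\partial_t\ln K(x,t)-\partial_t\ln K(y,t)\Bigr](xy)^{2n-2}K(x,t)K(y,t)\,\td x\,\td y\ge0,
\end{equation*}
because $x^2-y^2$ and $\partial_t\ln K(x,t)-\partial_t\ln K(y,t)$ always carry the same sign. Hence $\frac{M_{2n}}{M_{2n-2}}$ is increasing in $t$, its reciprocal is decreasing, the negative prefactor makes $\frac{B_{2n-1}}{B_{2n+1}}$ increasing on $\bigl(0,\frac12\bigr)$, and symmetry completes the proof. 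I expect the main obstacle to be not this last, essentially mechanical step, but the realization that the direct approach is barred by the interior zero of $B_{2n}$ and that the integral representation above is the way around it; verifying the representation and the log-supermodularity is then routine.
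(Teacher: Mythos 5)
Your argument is correct, and it is a genuinely different route from the one taken in the paper. The paper's proof is inductive: it writes $\phi_n(t)=(-1)^nB_{2n-1}(t)$, uses Yang's $H$-function together with Ostrowski's lemma on the unique zero $r_{2n}\in\bigl(0,\frac14\bigr)$ of $B_{2n}$, splits $\bigl(0,\frac12\bigr)$ at that zero, and applies the L'H\^opital-type monotonicity rule of Anderson--Vamanamurthy--Vuorinen on each piece, with the base cases $n=1,2$ imported from the cited Proposition~1 of Zhang--Yang--Qi; so the interior zero of $B_{2n}$, which you correctly identify as the obstruction to the naive reduction, is handled by interval-splitting plus induction rather than avoided. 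Your proof instead converts $B_{2m-1}(t)$, via its Fourier expansion and the Laplace-transform identity for $k^{-(2m-1)}$, into a moment $M_{2m-2}(t)$ of the positive kernel $K(x,t)=\frac{\sin 2\pi t}{\cosh x-\cos 2\pi t}$, and then gets monotonicity of $\frac{M_{2n}}{M_{2n-2}}$ from log-supermodularity of $K$ by the standard Chebyshev-type symmetrization; I checked the constants (the ratio equals $-\frac{4\pi^2(2n-1)}{2n+1}\frac{M_{2n-2}}{M_{2n}}$), the closed form of the kernel sum, the sign of $\partial_x\partial_t\ln K$, and the symmetrized double integral, and all are right, with $n=1$ correctly disposed of by $\frac{B_1(t)}{B_3(t)}=\frac{1}{t(t-1)}$. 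What your route buys: it is direct (no induction, no base cases from another paper), it makes no use of the location of the zeros of $B_{2n}$, and it yields strict monotonicity of \emph{any} moment ratio $\frac{M_j}{M_i}$ ($j>i$), hence immediately also the paper's Corollary on $(-1)^{n-m}\frac{B_{2m-1}(t)}{B_{2n-1}(t)}$. What the paper's route buys: it stays entirely within elementary polynomial identities and a one-variable monotone rule, with no Fourier series or interchange of limits, and its intermediate facts (concavity of $\phi_n$, monotonicity of $\frac{\phi_{n+1}'}{\phi_{n+2}'}$) are reused elsewhere in the paper. Two small housekeeping points you should add for completeness: justify differentiation under the integral sign for $M_j'(t)$ (routine domination on compact $t$-subintervals of $\bigl(0,\frac12\bigr)$, since $\cosh x-\cos 2\pi t$ is bounded below and $K$, $\partial_tK$ decay like $\te^{-x}$), and note that the symmetrized integrand is strictly positive off the diagonal, so the monotonicity obtained is strict, as the theorem asserts.
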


On 4 January 2024, Iosif Pinelis provided an alternative solution to Qi's problem, together with some further results, at \url{https://mathoverflow.net/a/461546}. His proof was subsequently published in~\cite{Pinelis-Lie-MIA-24}.
\par
In Section~\ref{Qi-probl-3rd-proof-sec}, we present the third solution to Qi's problem, namely, the third proof of Theorem~\ref{bernou-ratio-assum}. In Section~\ref{monot-two-bern-polyn-sec}, we establish the monotonicity of three additional ratios
$\frac{B_{2\ell}(s)}{B_{2\ell+1}(s)}$, $\frac{B_{2m}(s)}{B_{2\ell}(s)}$, and $\frac{B_{2\ell}(s)}{B_{2\ell-1}(s)}$.
In Section~\ref{inequalities-sec-bernoulli}, we derive several new inequalities and recover some known inequalities for the Bernoulli polynomials $B_\ell(s)$, the Bernoulli numbers $B_{2\ell}$, and their ratios, such as $\frac{B_{2\ell+2}}{B_{2\ell}}$.

\section{Lemmas and properties of Bernoulli polynomials}

To advance our objectives, we first recall two useful lemmas together with essential properties of the Bernoulli polynomials $B_\ell(s)$.

\subsection{Two lemmas}
The following two lemmas are useful in Section~\ref{Qi-probl-3rd-proof-sec} of this paper.

\begin{lem}[{\cite[pp.~10--11, Theorem~1.25]{AVV-1997}}]\label{AVV-1997-mon-rule}
For $\alpha,\beta\in\mathbb{R}$ with $\alpha<\beta$, let $q(s)$ and $p(s)$ be continuous on $[\alpha,\beta]$, differentiable on $(\alpha,\beta)$, and $p'(s)\ne 0$ on $(\alpha,\beta)$.
If the ratio $\frac{q'(s)}{p'(s)}$ increases in $s\in(\alpha,\beta)$, then both $\frac{q(s)-q(\alpha)}{p(s)-p(\alpha)}$ and $\frac{q(s)-q(\beta)}{p(s)-p(\beta)}$ increase in $s\in(\alpha,\beta)$.
\end{lem}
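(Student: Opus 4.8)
The plan is to prove this monotonicity rule from Cauchy's mean value theorem combined with a sign analysis of the derivatives of the two quotients.

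The first step is to fix the sign of $p'$. Since $p'$ does not vanish on $(\alpha,\beta)$, Rolle's theorem forces $p$ to be injective, hence — being continuous on $[\alpha,\beta]$ — strictly monotone there; replacing the pair $(q,p)$ by $(-q,-p)$ if necessary, which changes neither the hypothesis that $\frac{q'}{p'}$ increase nor the two quotients in the conclusion, I may assume $p$ is strictly increasing and therefore $p'>0$ on $(\alpha,\beta)$. Then $p(t)-p(\alpha)>0$ and $p(t)-p(\beta)<0$ for every $t\in(\alpha,\beta)$, so
\begin{equation*}
G(t)=\frac{q(t)-q(\alpha)}{p(t)-p(\alpha)}\qquad\text{and}\qquad H(t)=\frac{q(t)-q(\beta)}{p(t)-p(\beta)}
\end{equation*}
are well defined and differentiable on $(\alpha,\beta)$.

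Next I would treat $G$. Differentiating, the sign of $G'(t)$ is the sign of $q'(t)\bigl(p(t)-p(\alpha)\bigr)-\bigl(q(t)-q(\alpha)\bigr)p'(t)$ and, after dividing by the positive quantity $p'(t)\bigl(p(t)-p(\alpha)\bigr)$, it is the sign of $\frac{q'(t)}{p'(t)}-G(t)$. By Cauchy's mean value theorem on $[\alpha,t]$ there is $\xi\in(\alpha,t)$ with $G(t)=\frac{q'(\xi)}{p'(\xi)}$; since $\frac{q'}{p'}$ is increasing and $\xi<t$ this gives $G(t)\le\frac{q'(t)}{p'(t)}$, whence $G'(t)\ge0$ on $(\alpha,\beta)$, so $G$ increases there. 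The function $H$ is handled the same way with one sign reversal: because $p(t)-p(\beta)<0$, the sign of $H'(t)$ is now the sign of $-\bigl(\frac{q'(t)}{p'(t)}-H(t)\bigr)$, while Cauchy's mean value theorem on $[t,\beta]$ gives $\eta\in(t,\beta)$ with $H(t)=\frac{q(\beta)-q(t)}{p(\beta)-p(t)}=\frac{q'(\eta)}{p'(\eta)}$, and then $\eta>t$ together with monotonicity of $\frac{q'}{p'}$ yields $H(t)\ge\frac{q'(t)}{p'(t)}$, so $H'(t)\ge0$ and $H$ increases on $(\alpha,\beta)$. If ``increasing'' is understood strictly, the strict monotonicity of $\frac{q'}{p'}$ turns each $\le$ or $\ge$ above into a strict inequality.

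I do not expect a genuine obstacle, since the result is classical; the two points one must not skip are establishing that $p'$ keeps a constant sign (so that $p$ is strictly monotone and the denominators $p(t)-p(\alpha)$ and $p(t)-p(\beta)$ have definite signs) and tracking carefully the sign reversal in the treatment of $H$. A differentiation-free variant is also available: for $\alpha<s<t<\beta$ one exhibits $G(t)$ as the mediant $\frac{a+c}{b+d}$ of $G(s)=\frac{q(s)-q(\alpha)}{p(s)-p(\alpha)}=\frac{a}{b}$ and $\frac{q(t)-q(s)}{p(t)-p(s)}=\frac{c}{d}$, evaluates the latter two ratios by Cauchy's mean value theorem on $[\alpha,s]$ and $[s,t]$, and applies the elementary fact that $\frac{a}{b}<\frac{a+c}{b+d}<\frac{c}{d}$ whenever $\frac{a}{b}<\frac{c}{d}$ and $b,d>0$; the same device applied to the blocks $[s,t]$ and $[t,\beta]$ settles $H$.
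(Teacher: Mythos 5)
Your argument is correct. Note that the paper does not prove this lemma at all: it is quoted as a known result from the cited reference \cite[Theorem~1.25]{AVV-1997}, so there is no in-paper proof to compare against; your derivation is essentially the classical proof of that theorem. The two delicate points are handled properly: you justify that $p'$ keeps a fixed sign (via injectivity of $p$ from Rolle; alternatively one could invoke Darboux's intermediate value property of derivatives), the normalization $(q,p)\mapsto(-q,-p)$ indeed changes neither $\frac{q'}{p'}$ nor the two quotients, and the Cauchy mean value theorem applications on $[\alpha,t]$ and $[t,\beta]$ together with the sign reversal coming from $p(t)-p(\beta)<0$ give exactly the inequalities $G(t)\le\frac{q'(t)}{p'(t)}\le H(t)$ needed to conclude $G'\ge0$ and $H'\ge0$, with strict versions under strict monotonicity of $\frac{q'}{p'}$.
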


\begin{lem}[\cite{Ostrowski-1960-Murch}]\label{Ostrowski-1960-Murch-lem}
The Bernoulli polynomial $B_{2\ell}(s)$ for $\ell\in\mathbb{N}$ has one real zero, denoted by $r_{2\ell}$, on the interval $\bigl(0,\frac{1}{2}\bigr)$. The sequence $r_{2\ell}$ increases in $\ell$ and tends to $\frac{1}{4}$ as $\ell\to\infty$.
\end{lem}

\begin{rem}
In~\cite{Norlund-1922}, the double inequality $\frac{1}{6}<r_{2\ell}<\frac{1}{4}$ was established for $\ell\in\mathbb{N}$. In~\cite[p.~534]{Lehmer-Monthly-1940}, a more precise estimate $\frac{1}{4}-\frac{1}{2^{2\ell+1}\pi}<r_{2\ell}<\frac{1}{4}$ was presented for $\ell\in\mathbb{N}$. See also~\cite[p.~594, Section~24.12(i)]{NIST-HB-2010}.
\end{rem}

\subsection{Some properties of Bernoulli polynomials}
The Bernoulli polynomials $B_\ell(s)$ have the following properties:
\begin{enumerate}
\item
For $0<s<\frac{1}{2}$, the positivity
\begin{equation}\label{Bern-polyn-half-posit}
(-1)^{\ell+1}B_{2\ell+1}(s)>0, \quad \ell\in\mathbb{N}_0
\end{equation}
and
\begin{equation}\label{Even-Bernoulli-posit}
(-1)^{\ell+1}B_{2\ell}>0, \quad \ell\in\mathbb{N}
\end{equation}
hold. See~\cite[p.~805, Entry~23.1.14]{abram} and~\cite[p.~588, Entry~24.2.2]{NIST-HB-2010}.
\item
For $\ell\in\mathbb{N}$, the recursive relation
\begin{equation}\label{Bernou-polyn-deriv}
B_\ell'(s)=\ell B_{\ell-1}(s)
\end{equation}
validates. See~\cite[p.~590, Entry~24.4.34]{NIST-HB-2010}.
\item
For $\ell\in\mathbb{N}_0$, the identity
\begin{equation}\label{Bern-polyn-symm-ID}
B_{\ell}(1-s)=(-1)^\ell B_{\ell}(s)
\end{equation}
validates. See~\cite[p.~804, Entry~23.1.8]{abram}.
\item
For $\ell\in\mathbb{N}_0$, the relation
\begin{equation}\label{Bernoulli-polyn-numb}
B_\ell\biggl(\frac{1}{2}\biggr)=-\biggl(1-\frac{1}{2^{\ell-1}}\biggr)B_\ell
\end{equation}
holds. See~\cite[p.~590, Entry~24.4.27]{NIST-HB-2010}.
\item
For $\ell\in\mathbb{N}$, the relation
\begin{equation}\label{abramp806-nistp590}
B_\ell\biggl(\frac{1}{4}\biggr)=(-1)^\ell B_\ell\biggl(\frac{3}{4}\biggr)=-\frac{1-2^{1-\ell}}{2^\ell}B_\ell-\frac{\ell}{4^\ell}E_{\ell-1}
\end{equation}
validates, where $E_\ell$ for $\ell\in\mathbb{N}_0$ denotes the Euler numbers generated by
\begin{equation*}
\frac{2\te^z}{\te^{2z}+1}
=\sum_{\ell=0}^\infty E_\ell\frac{z^\ell}{\ell!}
=\sum_{\ell=0}^\infty E_{2\ell}\frac{z^{2\ell}}{(2\ell)!}, \quad |z|<\frac\pi2.
\end{equation*}
See~\cite[p.~806, Entry~23.1.22]{abram}, \cite[p.~534]{Lehmer-Monthly-1940}, and~\cite[p.~590, Entry~24.4.31]{NIST-HB-2010}.
\item
As $\ell\to\infty$, the asymptotic approximations
\begin{equation}\label{p594Entry24.11.5NIST-HB-2010}
(-1)^{\lfloor \ell/2\rfloor-1}\frac{(2\pi)^\ell}{2(\ell!)}B_\ell(s)\sim
\begin{cases}
\cos(2\pi s), & \ell=2k\\
\sin(2\pi s), & \ell=2k-1
\end{cases}
\end{equation}
validate uniformly for $s$ on compact subsets of $\mathbb{C}$, where $\lfloor \ell/2\rfloor$ denotes the floor function whose value is the largest integer less than or equal to $\frac{\ell}{2}$. See~\cite[p.~594, Entry~24.11.5]{NIST-HB-2010}.
\item
Let $\zeta(z)$ denote the Riemann zeta function which is defined by the series $\zeta(z)=\sum_{\ell=1}^\infty\frac1{\ell^z}$ under the condition $\Re(z)>1$ and by analytic continuation elsewhere. The Bernoulli numbers $B_{2\ell}$ and the Riemmann zeta function $\zeta(z)$ have the relation
\begin{equation}\label{Bernoul-zeta-rel}
B_{2\ell}=\frac{(-1)^{\ell+1}2(2\ell)!}{(2\pi)^{2\ell}}\zeta(2\ell), \quad \ell\in\mathbb{N}.
\end{equation}
See~\cite[pp.~807--808, Section~23.2]{abram} and~\cite[p.~5, (1.14)]{Temme-96-book}.
\end{enumerate}

\section{Third solution to Qi's problem}\label{Qi-probl-3rd-proof-sec}

In this section, we present the third solution to Qi's problem, namely the third proof of Theorem~\ref{bernou-ratio-assum}. In fact, this third proof of Theorem~\ref{bernou-ratio-assum} was written down slightly earlier than the original version of the proof of~\cite[Proposition~1]{Bernouli-No-Tail.tex}.

\begin{proof}[Third proof of Theorem~\ref{bernou-ratio-assum}]
Define the function
\begin{equation}\label{H(fg)(s)-dfn}
H_{f,g}(s)=\frac{f'(s)}{g'(s)}g(s)-f(s), \quad s\in(a,b),
\end{equation}
where $f(s)$ and $g(s)$ are two differentiable functions on $(a,b)$ such that $g'(s)\ne0$ on $(a,b)$. If $f(s)$ and $g(s)$ are twice differentiable on $(a,b)$, then
\begin{equation}\label{H(fg)(s)fg-dfn}
\biggl[\frac{f(s)}{g(s)}\biggr]'=\frac{g'(s)}{g^2(s)}\biggl[\frac{f'(s)}{g'(s)}g(s)-f(s)\biggr]
=\frac{g'(s)}{g^2(s)}H_{f,g}(s)
\end{equation}
and
\begin{equation*}
H_{f,g}'(s)=\biggl[\frac{f'(s)}{g'(s)}\biggr]'g(s).
\end{equation*}
The function $H_{f,g}(s)$ in~\eqref{H(fg)(s)-dfn} was first introduced in the preprint~\cite{Yang-arXiv-2014}, it was called Yang's $H$-function in~\cite{Tian-AIMS-M-5-2020}, and it has been applied, for example, in~\cite{Alzer-ITSF-31-2020, Richards-ITSF-32-2020, Tian-RM-78-2023, Yang-JMAA-508-2022, Yang-JMAA-470-2019} and~\cite[Theorem 2.1]{Yang-JMAA-428-2015}.
\par
We write
\begin{equation}\label{Bernoul-phi-rel-ratio}
\frac{B_{2\ell-1}(s)}{B_{2\ell+1}(s)}=-\frac{(-1)^\ell B_{2\ell-1}(s)}{(-1)^{\ell+1}B_{2\ell+1}(s)}
=-\frac{\phi_\ell(s)}{\phi_{\ell+1}(s)}, \quad \ell\in\mathbb{N},
\end{equation}
where, by the positivity in~\eqref{Bern-polyn-half-posit},
\begin{equation*}
\phi_\ell(s)=(-1)^\ell B_{2\ell-1}(s)>0, \quad \ell\in\mathbb{N}
\end{equation*}
is clearly valid in $s\in\bigl(0,\frac{1}{2}\bigr)$.
\par
By the formula~\eqref{Bernou-polyn-deriv}, we obtain
\begin{equation}\label{phi-deriv-first}
\phi_\ell'(s)=(-1)^\ell B_{2\ell-1}'(s)=(-1)^\ell(2\ell-1)B_{2\ell-2}(s), \quad \ell\in\mathbb{N}
\end{equation}
and
\begin{multline}\label{phi-deriv-rela}
\phi_\ell''(s)=(-1)^\ell B_{2\ell-1}''(s)
=(-1)^\ell(2\ell-1)(2\ell-2)B_{2\ell-3}(s)\\
=-(2\ell-1)(2\ell-2)\phi_{\ell-1}(s), \quad \ell\ge2.
\end{multline}
Accordingly, the second derivative $\phi_\ell''(s)$ for $\ell\ge2$ is negative in $s\in\bigl(0,\frac{1}{2}\bigr)$, the first derivative $\phi_\ell'(s)$ for $\ell\ge2$ decreases in $s\in\bigl(0,\frac{1}{2}\bigr)$, and the function $\phi_\ell(s)$ for $\ell\ge2$ is concave in $s\in\bigl(0,\frac{1}{2}\bigr)$.
\par
From the identity~\eqref{Bernoulli-polyn-numb}, it follows that
\begin{equation*}
\phi_\ell'(0)=(-1)^\ell(2\ell-1)B_{2\ell-2}(0)=(-1)^\ell(2\ell-1)B_{2\ell-2}>0
\end{equation*}
and
\begin{equation*}
\phi_\ell'\biggl(\frac{1}{2}\biggr)=(-1)^\ell(2\ell-1)B_{2\ell-2}\biggl(\frac{1}{2}\biggr)
=(-1)^{\ell+1}\biggl(1-\frac{1}{2^{2\ell-3}}\biggr)(2\ell-1)B_{2\ell-2}
<0
\end{equation*}
for $\ell\ge2$, where we used the positivity in~\eqref{Even-Bernoulli-posit}.
As a result, the first derivative $\phi_\ell'(s)$ has a unique zero $s_{\ell,0}\in\bigl(0,\frac{1}{2}\bigr)$ for $\ell\ge2$.
\par
As done in the proof of~\cite[Proposition~1]{Bernouli-No-Tail.tex}, both $\frac{B_1(s)}{B_3(s)}$ and $\frac{B_3(s)}{B_5(s)}$ increase in $s\in\bigl(0,\frac{1}{2}\bigr)$. Equivalently, both $\frac{\phi_1(s)}{\phi_{2}(s)}$ and $\frac{\phi_2(s)}{\phi_{3}(s)}$ decrease in $s\in\bigl(0,\frac{1}{2}\bigr)$.
\par
Assume that, for some $\ell\ge3$, the ratio $\frac{\phi_\ell(s)}{\phi_{\ell+1}(s)}$ decreases in $s\in\bigl(0,\frac{1}{2}\bigr)$. By~\eqref{phi-deriv-rela}, this inductive hypothesis is equivalent to the decreasing property of the ratio
\begin{equation*}
\frac{\phi_{\ell+1}''(s)}{\phi_{\ell+2}''(s)}=\frac{(2\ell+1)(2\ell)}{(2\ell+1)(2\ell+2)} \frac{\phi_\ell(s)}{\phi_{\ell+1}(s)}, \quad \ell\ge3
\end{equation*}
in $s\in\bigl(0,\frac{1}{2}\bigr)$, that is,
\begin{equation*}
\biggl[\frac{\phi_{\ell+1}''(s)}{\phi_{\ell+2}''(s)}\biggr]'<0, \quad \ell\ge3, \quad s\in\biggl(0,\frac{1}{2}\biggr).
\end{equation*}
This means that
\begin{equation*}
H_{\phi_{\ell+1}'(s),\phi_{\ell+2}'(s)}'(s)=\biggl[\frac{\phi_{\ell+1}''(s)}{\phi_{\ell+2}''(s)}\biggr]'\phi_{\ell+2}'(s), \quad \ell\ge3
\end{equation*}
has a unique zero $s_{\ell+2,0}\in\bigl(0,\frac{1}{2}\bigr)$ and that
\begin{multline*}
H_{\phi_{\ell+1}'(s),\phi_{\ell+2}'(s)}(s)\ge H_{\phi_{\ell+1}'(s),\phi_{\ell+2}'(s)}(s_{\ell+2,0})\\
=\frac{\phi_{\ell+1}''(s_{\ell+2,0})}{\phi_{\ell+2}''(s_{\ell+2,0})}\phi_{\ell+2}'(s_{\ell+2,0})-\phi_{\ell+1}'(s_{\ell+2,0})
=-\phi_{\ell+1}'(s_{\ell+2,0})
\end{multline*}
for some $\ell\ge3$ and $s\in\bigl(0,\frac{1}{2}\bigr)$.
\par
From the equation~\eqref{phi-deriv-first} and Lemma~\ref{Ostrowski-1960-Murch-lem}, we see that the zero $s_{\ell+2,0}$ is just the zero of the Bernoulli polynomial $B_{2\ell+2}(s)$, that is, $s_{\ell+2,0}=r_{2\ell+2}$, for some $\ell\ge3$. Similarly, the function $\phi_{\ell+1}'(s)$ has a unique zero $r_{2\ell}=s_{\ell+1,0}\in\bigl(0,\frac{1}{2}\bigr)$ for some $\ell\ge3$. From Lemma~\ref{Ostrowski-1960-Murch-lem}, it follows that $r_{2\ell}=s_{\ell+1,0}<s_{\ell+2,0}=r_{2\ell+2}$ for some $\ell\ge3$. Since the first derivative $\phi_\ell'(s)$ for $\ell\ge2$ decreases in $s\in\bigl(0,\frac{1}{2}\bigr)$, we acquire that
\begin{equation*}
\phi_{\ell+1}'(s_{\ell+2,0})<\phi_{\ell+1}'(s_{\ell+1,0})=0, \quad \ell\ge2.
\end{equation*}
Consequently, we conclude $H_{\phi_{\ell+1}'(s),\phi_{\ell+2}'(s)}(s)>0$ for some $\ell\ge3$ and $s\in\bigl(0,\frac{1}{2}\bigr)$. This conclusion means that
\begin{equation*}
\biggl[\frac{\phi_{\ell+1}'(s)}{\phi_{\ell+2}'(s)}\biggr]'
=\frac{\phi_{\ell+2}''(s)}{[\phi_{\ell+2}'(s)]^2}H_{\phi_{\ell+1}'(s),\phi_{\ell+2}'(s)}(s)<0
\end{equation*}
for some $\ell\ge3$ and $s\in\bigl(0,\frac{1}{2}\bigr)\setminus\{s_{\ell+2,0}\}$, where we used the negativity $\phi_\ell''(s)<0$ for $\ell\ge2$ and $s\in\bigl(0,\frac{1}{2}\bigr)$. Thus, the ratio $\frac{\phi_{\ell+1}'(s)}{\phi_{\ell+2}'(s)}$ for some $\ell\ge3$ decreases on the intervals $(0,r_{2\ell+2})$ and $\bigl(r_{2\ell+2},\frac{1}{2}\bigr)$, respectively.
\par
Since
\begin{equation*}
\phi_\ell(0)=(-1)^\ell B_{2\ell-1}(0)=(-1)^\ell B_{2\ell-1}=0
\end{equation*}
for $\ell\ge2$, by virtue of Lemma~\ref{AVV-1997-mon-rule} and the decreasing property of the ratio $\frac{\phi_{\ell+1}'(s)}{\phi_{\ell+2}'(s)}$ for some $\ell\ge3$ on the interval $(0,r_{2\ell+2})$, we reveal that the ratio
\begin{equation*}
\frac{\phi_{\ell+1}(s)}{\phi_{\ell+2}(s)}=\frac{\phi_{\ell+1}(s)-\phi_{\ell+1}(0)}{\phi_{\ell+2}(s)-\phi_{\ell+2}(0)}
\end{equation*}
for some $\ell\ge3$ decreases in $s\in(0,r_{2\ell+2})$.
By the identity~\eqref{Bern-polyn-symm-ID}, it follows that
\begin{equation*}
\phi_\ell\biggl(\frac{1}{2}\biggr)=(-1)^\ell B_{2\ell-1}\biggl(\frac{1}{2}\biggr)=0
\end{equation*}
for $\ell\ge2$. In view of Lemma~\ref{AVV-1997-mon-rule} and the decreasing property of the ratio $\frac{\phi_{\ell+1}'(s)}{\phi_{\ell+2}'(s)}$ for some $\ell\ge3$ on the interval $\bigl(r_{2\ell+2},\frac{1}{2}\bigr)$, we arrive at that the ratio
\begin{equation*}
\frac{\phi_{\ell+1}(s)}{\phi_{\ell+2}(s)}=\frac{\phi_{\ell+1}(s)-\phi_{\ell+1}\bigl(\frac{1}{2}\bigr)}{\phi_{\ell+2}(s)-\phi_{\ell+2}\bigl(\frac{1}{2}\bigr)}
\end{equation*}
for some $\ell\ge3$ decreases in $\bigl(r_{2\ell+2},\frac{1}{2}\bigr)$. Since the ratio $\frac{\phi_{\ell+1}(s)}{\phi_{\ell+2}(s)}$ is continuous at the point $r_{2\ell+2}=s_{\ell+2,0}$, which is the zero of $\phi_{\ell+2}'(s)$ on $\bigl(0,\frac{1}{2}\bigr)$, we conclude that the ratio $\frac{\phi_{\ell+1}(s)}{\phi_{\ell+2}(s)}$ decreases in $s\in\bigl(0,\frac{1}{2}\bigr)$ for some $\ell\ge3$.
\par
By induction, the ratio $\frac{\phi_\ell(s)}{\phi_{\ell+1}(s)}$ decreases in $s\in\bigl(0,\frac{1}{2}\bigr)$ for all $\ell\in\mathbb{N}$. Therefore, from the relation~\eqref{Bernoul-phi-rel-ratio}, the ratio $\frac{B_{2\ell-1}(s)}{B_{2\ell+1}(s)}$ for $\ell\in\mathbb{N}$ increases in $s\in\bigl(0,\frac{1}{2}\bigr)$.
\par
By the identity~\eqref{Bern-polyn-symm-ID}, we obtain
\begin{equation*}
\frac{B_{2\ell-1}(s)}{B_{2\ell+1}(s)}=\frac{B_{2\ell-1}(1-s)}{B_{2\ell+1}(1-s)}
\end{equation*}
for $\ell\in\mathbb{N}$.
Consequently, from the increasing property of the ratio $\frac{B_{2\ell-1}(s)}{B_{2\ell+1}(s)}$ for $\ell\in\mathbb{N}$ in $s\in\bigl(0,\frac{1}{2}\bigr)$, we derive that the ratio $\frac{B_{2\ell-1}(s)}{B_{2\ell+1}(s)}$ for $\ell\in\mathbb{N}$ decreases in $s\in\bigl(\frac{1}{2},1\bigr)$.
The third proof of Theorem~\ref{bernou-ratio-assum} is complete.
\end{proof}

\begin{cor}\label{B(2m-1)(2n-1)-cor}
For $m,\ell\in\mathbb{N}$ such that $m<\ell$, the ratio $(-1)^{\ell-m}\frac{B_{2m-1}(s)}{B_{2\ell-1}(s)}$ is positive and decreases in $s\in\bigl(0,\frac{1}{2}\bigr)$, while it is positive and increases in $s\in\bigl(\frac{1}{2},1\bigr)$.
\end{cor}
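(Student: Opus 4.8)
The plan is to reduce the statement to Theorem~\ref{bernou-ratio-assum} by writing the ratio as a telescoping product. Recall from~\eqref{Bernoul-phi-rel-ratio} the functions $\phi_k(t)=(-1)^kB_{2k-1}(t)$, so that $B_{2k-1}(t)=(-1)^k\phi_k(t)$ and $\frac{\phi_k(t)}{\phi_{k+1}(t)}=-\frac{B_{2k-1}(t)}{B_{2k+1}(t)}$. First I would observe that, for $m<n$,
\begin{equation*}
(-1)^{n-m}\frac{B_{2m-1}(t)}{B_{2n-1}(t)}
=(-1)^{n-m}\,\frac{(-1)^m\phi_m(t)}{(-1)^n\phi_n(t)}
=\frac{\phi_m(t)}{\phi_n(t)}
=\prod_{k=m}^{n-1}\frac{\phi_k(t)}{\phi_{k+1}(t)},
\end{equation*}
a product with $n-m\ge1$ factors.

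Second, on $\bigl(0,\frac{1}{2}\bigr)$ every $\phi_k$ is positive by~\eqref{Bern-polyn-half-posit}, so the quantity above is positive there. Moreover each factor $\frac{\phi_k(t)}{\phi_{k+1}(t)}=-\frac{B_{2k-1}(t)}{B_{2k+1}(t)}$ is strictly decreasing on $\bigl(0,\frac{1}{2}\bigr)$ by Theorem~\ref{bernou-ratio-assum}. Since a finite product of positive, strictly decreasing functions is again strictly decreasing (if $f,g>0$ and $f',g'<0$ then $(fg)'=f'g+fg'<0$), it follows that $(-1)^{n-m}\frac{B_{2m-1}(t)}{B_{2n-1}(t)}$ is positive and decreasing in $t\in\bigl(0,\frac{1}{2}\bigr)$.

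Third, I would use the reflection identity~\eqref{Bern-polyn-symm-ID}, which gives $B_{2j-1}(1-t)=-B_{2j-1}(t)$ and hence
\begin{equation*}
(-1)^{n-m}\frac{B_{2m-1}(1-t)}{B_{2n-1}(1-t)}=(-1)^{n-m}\frac{B_{2m-1}(t)}{B_{2n-1}(t)};
\end{equation*}
that is, the function in question is symmetric about $t=\frac{1}{2}$. Therefore its positivity and decreasing behaviour on $\bigl(0,\frac{1}{2}\bigr)$ transfer into positivity and increasing behaviour on $\bigl(\frac{1}{2},1\bigr)$, which finishes the proof.

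Concerning difficulty: once Theorem~\ref{bernou-ratio-assum} is available there is no real obstacle --- the whole argument is the telescoping identity together with the elementary fact on products of monotone functions. The only places needing attention are the parity bookkeeping of the signs $(-1)^k$ and the observation that the denominators $\phi_{k+1}(t)$ never vanish on $\bigl(0,\frac{1}{2}\bigr)$, guaranteed by~\eqref{Bern-polyn-half-posit}. Equivalently, one could fix $m$ and induct on $n$, at each step multiplying the (positive, decreasing) ratio $\frac{\phi_m(t)}{\phi_n(t)}$ by the (positive, decreasing) factor $\frac{\phi_n(t)}{\phi_{n+1}(t)}$; this is the same argument written without the explicit product.
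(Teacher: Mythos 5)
Your proposal is correct and is essentially the paper's own argument: both write $(-1)^{n-m}\frac{B_{2m-1}(t)}{B_{2n-1}(t)}$ as the telescoping product $\prod_{\ell=m}^{n-1}\bigl[-\frac{B_{2\ell-1}(t)}{B_{2\ell+1}(t)}\bigr]$ of positive factors and invoke Theorem~\ref{bernou-ratio-assum} together with the fact that a product of positive monotone functions is monotone. The only cosmetic difference is that you transfer the result to $\bigl(\frac{1}{2},1\bigr)$ via the reflection identity~\eqref{Bern-polyn-symm-ID}, whereas the paper applies the second half of Theorem~\ref{bernou-ratio-assum} to each factor directly; both are equally valid.
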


\begin{proof}
From~\eqref{Bern-polyn-half-posit} and~\eqref{Bern-polyn-symm-ID}, it follows that the ratio $\frac{B_{2\ell-1}(s)}{B_{2\ell+1}(s)}$ for $\ell\in\mathbb{N}$ is negative in $s\in(0,1)$. Hence, making use of Theorem~\ref{bernou-ratio-assum}, the function $-\frac{B_{2\ell-1}(s)}{B_{2\ell+1}(s)}$ for $\ell\in\mathbb{N}$ is positive and decreases in $s\in\bigl(0,\frac{1}{2}\bigr)$, while it is positive and increases in $s\in\bigl(\frac{1}{2},1\bigr)$. Consequently, the ratio
\begin{equation*}
(-1)^{\ell-m}\frac{B_{2m-1}(s)}{B_{2\ell-1}(s)}
=\prod_{q=m}^{\ell-1}\biggl[-\frac{B_{2q-1}(s)}{B_{2q+1}(s)}\biggr]
\end{equation*}
is positive and decreases in $s\in\bigl(0,\frac{1}{2}\bigr)$, while it is positive and increases in $s\in\bigl(\frac{1}{2},1\bigr)$. The required proof is complete.
\end{proof}

\begin{cor}\label{T4}
For $\ell>m\in\mathbb{N}$, both of the functions
\begin{equation*}
(-1)^{\ell-m}\frac{B_{2m}(s)-B_{2m}}{B_{2\ell}(s)-B_{2\ell}}\quad\text{and}\quad
(-1)^{\ell-m}\frac{B_{2m}(s) -B_{2m}\bigl(\frac12)}{B_{2\ell}(s)-B_{2\ell}\bigl(\frac12\bigr)}
\end{equation*}%
decrease in $s\in\bigl(0,\frac12\bigr)$ and increase in $s\in\bigl(\frac12,1\bigr)$.
\end{cor}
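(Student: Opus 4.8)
The plan is to recognize both functions in the statement as the two quotients $\frac{q(t)-q(\alpha)}{p(t)-p(\alpha)}$ and $\frac{q(t)-q(\beta)}{p(t)-p(\beta)}$ appearing in the monotonicity rule, Lemma~\ref{AVV-1997-mon-rule}, with $\alpha=0$ and $\beta=\frac12$, and then to reduce its hypothesis to Corollary~\ref{B(2m-1)(2n-1)-cor}. Fix $m<n$ in $\mathbb{N}$ and put
\[ q(t)=(-1)^{n-m+1}B_{2m}(t),\qquad p(t)=B_{2n}(t), \]
which are polynomials, hence continuous on $\bigl[0,\frac12\bigr]$ and differentiable on $\bigl(0,\frac12\bigr)$. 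By~\eqref{Bernou-polyn-deriv}, $q'(t)=(-1)^{n-m+1}(2m)B_{2m-1}(t)$ and $p'(t)=(2n)B_{2n-1}(t)$; by~\eqref{Bern-polyn-half-posit} the factor $B_{2n-1}(t)$ does not vanish on $\bigl(0,\frac12\bigr)$, so $p'(t)\ne0$ there, and, because $p$ is then strictly monotone on that interval, the denominators $B_{2n}(t)-B_{2n}$ and $B_{2n}(t)-B_{2n}\bigl(\frac12\bigr)$ are likewise nonzero on the open interval.

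Next I would compute
\[ \frac{q'(t)}{p'(t)}=\frac{m}{n}\,(-1)^{n-m+1}\frac{B_{2m-1}(t)}{B_{2n-1}(t)}=-\frac{m}{n}\biggl[(-1)^{n-m}\frac{B_{2m-1}(t)}{B_{2n-1}(t)}\biggr]. \]
By Corollary~\ref{B(2m-1)(2n-1)-cor}, the bracketed quantity is decreasing on $\bigl(0,\frac12\bigr)$, so multiplying by the negative constant $-\frac mn$ shows that $\frac{q'(t)}{p'(t)}$ is increasing on $\bigl(0,\frac12\bigr)$. Lemma~\ref{AVV-1997-mon-rule} applied on $\bigl[0,\frac12\bigr]$ then gives that both
\[ \frac{q(t)-q(0)}{p(t)-p(0)}=(-1)^{n-m+1}\frac{B_{2m}(t)-B_{2m}}{B_{2n}(t)-B_{2n}},\qquad \frac{q(t)-q\bigl(\frac12\bigr)}{p(t)-p\bigl(\frac12\bigr)}=(-1)^{n-m+1}\frac{B_{2m}(t)-B_{2m}\bigl(\frac12\bigr)}{B_{2n}(t)-B_{2n}\bigl(\frac12\bigr)} \]
are increasing on $\bigl(0,\frac12\bigr)$; multiplying each by $-1$ yields that the two functions in the statement are decreasing on $\bigl(0,\frac12\bigr)$.

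For the interval $\bigl(\frac12,1\bigr)$ I would use the reflection formula~\eqref{Bern-polyn-symm-ID} at even indices, namely $B_{2k}(1-t)=B_{2k}(t)$, which makes each of $B_{2m}(t)-B_{2m}$, $B_{2m}(t)-B_{2m}\bigl(\frac12\bigr)$, $B_{2n}(t)-B_{2n}$, and $B_{2n}(t)-B_{2n}\bigl(\frac12\bigr)$ invariant under $t\mapsto1-t$; hence each target function $F$ satisfies $F(1-t)=F(t)$ on $(0,1)$, and being decreasing on $\bigl(0,\frac12\bigr)$ it must be increasing on $\bigl(\frac12,1\bigr)$. I do not expect a genuine obstacle: the two points that need care are choosing the sign $(-1)^{n-m+1}$ in $q$ so that $\frac{q'}{p'}$ ends up \emph{increasing} (the form in which Lemma~\ref{AVV-1997-mon-rule} is stated), and observing that the nonvanishing hypothesis $p'(t)\ne0$ on $\bigl(0,\frac12\bigr)$ is precisely what~\eqref{Bern-polyn-half-posit} supplies; the remainder is routine sign bookkeeping together with the substitution $t\mapsto1-t$.
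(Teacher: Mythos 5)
Your argument is correct and is essentially the paper's own proof: the authors likewise obtain Corollary~\ref{T4} by applying Lemma~\ref{AVV-1997-mon-rule} with the derivative ratio controlled by Corollary~\ref{B(2m-1)(2n-1)-cor} and then using the symmetry~\eqref{Bern-polyn-symm-ID} to pass to $\bigl(\frac12,1\bigr)$. You have merely written out the sign bookkeeping and the nonvanishing of $p'$ that the paper leaves implicit.
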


\begin{proof}
This follows from applying Lemma~\ref{AVV-1997-mon-rule}, the identity~\eqref{Bern-polyn-symm-ID}, and Corollary~\ref{B(2m-1)(2n-1)-cor}.
\end{proof}

\section{Monotonicity of more ratios between Bernoulli polynomials}\label{monot-two-bern-polyn-sec}

In this section, we consider three additional ratios
\[
\frac{B_{2\ell}(s)}{B_{2\ell+1}(s)}, \quad 
\frac{B_{2m}(s)}{B_{2\ell}(s)}, \quad 
\frac{B_{2\ell}(s)}{B_{2\ell-1}(s)}
\]
and investigate their monotonicity on the intervals $\bigl(0,\tfrac12\bigr)$ and $\bigl(\tfrac12,1\bigr)$, respectively.

\begin{thm}\label{T5}
The ratio $\frac{B_{2\ell}(s)}{B_{2\ell+1}(s)}$ for $\ell\in\mathbb{N}_0$ decreases in $s\in\bigl(0,\frac12\bigr)$ and in $s\in\bigl(\frac{1}{2},1\bigr)$.
\par
For fixed $s\in\bigl(0,\frac12\bigr)$ \textup{(}or for fixed $s\in\bigl(\frac{1}{2},1\bigr)$, respectively\textup{)}, the sequence $\frac{(2\ell+1)B_{2\ell}(s)}{B_{2\ell+1}(s)}$ increases \textup{(}or decreases, respectively\textup{)} in $\ell\in\mathbb{N}_0$, with the limit
\begin{equation}\label{bernoulli-ratio-n2infty}
\lim_{\ell\to\infty}\frac{(2\ell+1)B_{2\ell}(s)}{B_{2\ell+1}(s)}=2\pi \cot (2\pi s), \quad s\in(0,1)\setminus\biggl\{\frac{1}{2}\biggr\}.
\end{equation}
\end{thm}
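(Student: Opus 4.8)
The plan is to handle the three assertions in turn: the monotonicity in $t$, the monotonicity in $n$, and the limiting value.

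For the monotonicity in $t$ I would differentiate directly. Put $g_n(t)=\frac{B_{2n}(t)}{B_{2n+1}(t)}$. Using the derivative rule \eqref{Bernou-polyn-deriv}, for $n\in\mathbb{N}$ one gets
\begin{equation*}
g_n'(t)=\frac{2n\,B_{2n-1}(t)B_{2n+1}(t)-(2n+1)[B_{2n}(t)]^2}{[B_{2n+1}(t)]^2},
\end{equation*}
while for $n=0$ one has $g_0'(t)=-[B_1(t)]^{-2}<0$. On $\bigl(0,\frac12\bigr)$ the positivity \eqref{Bern-polyn-half-posit} shows that $B_{2n-1}(t)$ and $B_{2n+1}(t)$ have opposite signs, so the numerator is a negative quantity plus a nonpositive one and hence $g_n'(t)<0$. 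On $\bigl(\frac12,1\bigr)$ the same conclusion follows either by repeating the sign analysis after transporting the signs of $B_{2n\pm1}$ through \eqref{Bern-polyn-symm-ID}, or by observing from \eqref{Bern-polyn-symm-ID} that $g_n(1-t)=-g_n(t)$, so that $g_n'$ is even about $t=\frac12$. This part is routine.

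The crux of the monotonicity in $n$ is to recognise, again via \eqref{Bernou-polyn-deriv}, that
\begin{equation*}
\frac{(2n+1)B_{2n}(t)}{B_{2n+1}(t)}=\frac{B_{2n+1}'(t)}{B_{2n+1}(t)}=\bigl[\ln|B_{2n+1}(t)|\bigr]',
\end{equation*}
which is legitimate on $\bigl(0,\frac12\bigr)$ and on $\bigl(\frac12,1\bigr)$ because $B_{2n+1}$ has no zero there, by \eqref{Bern-polyn-half-posit} and \eqref{Bern-polyn-symm-ID}. Subtracting consecutive terms of the sequence,
\begin{equation*}
\frac{(2n+3)B_{2n+2}(t)}{B_{2n+3}(t)}-\frac{(2n+1)B_{2n}(t)}{B_{2n+1}(t)}=\left[\ln\left|\frac{B_{2n+3}(t)}{B_{2n+1}(t)}\right|\right]'.
\end{equation*}
By Theorem~\ref{bernou-ratio-assum} with $n+1$ in place of $n$, the ratio $\frac{B_{2n+1}(t)}{B_{2n+3}(t)}$ is increasing on $\bigl(0,\frac12\bigr)$ and decreasing on $\bigl(\frac12,1\bigr)$; it is negative on $(0,1)\setminus\bigl\{\frac12\bigr\}$ by \eqref{Bern-polyn-half-posit} and \eqref{Bern-polyn-symm-ID}, so $\bigl|\frac{B_{2n+1}(t)}{B_{2n+3}(t)}\bigr|$ is decreasing on $\bigl(0,\frac12\bigr)$ and increasing on $\bigl(\frac12,1\bigr)$, whence its reciprocal $\bigl|\frac{B_{2n+3}(t)}{B_{2n+1}(t)}\bigr|$, and therefore its logarithm, is increasing on $\bigl(0,\frac12\bigr)$ and decreasing on $\bigl(\frac12,1\bigr)$. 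Hence the displayed difference is positive on $\bigl(0,\frac12\bigr)$ and negative on $\bigl(\frac12,1\bigr)$, which is exactly the asserted monotonicity of the sequence in $n$. (Strictness at an individual $t$ comes from the strictness of the monotonicity in Theorem~\ref{bernou-ratio-assum}: its proof in fact shows that the derivative of $\frac{B_{2n+1}(t)}{B_{2n+3}(t)}$ does not vanish on $\bigl(0,\frac12\bigr)\cup\bigl(\frac12,1\bigr)$, even at the zero $r_{2n+2}$ of $B_{2n+2}$.)

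Finally, for \eqref{bernoulli-ratio-n2infty} I would invoke the asymptotic relations \eqref{p594Entry24.11.5NIST-HB-2010}. Writing
\begin{equation*}
u_n(t)=(-1)^{n-1}\frac{(2\pi)^{2n}}{2(2n)!}B_{2n}(t),\qquad v_n(t)=(-1)^{n-1}\frac{(2\pi)^{2n+1}}{2(2n+1)!}B_{2n+1}(t),
\end{equation*}
one has $u_n(t)\to\cos(2\pi t)$ and $v_n(t)\to\sin(2\pi t)$ uniformly on compact subsets of $(0,1)$, and a one-line simplification gives $\frac{(2n+1)B_{2n}(t)}{B_{2n+1}(t)}=2\pi\,\frac{u_n(t)}{v_n(t)}$ on $(0,1)\setminus\bigl\{\frac12\bigr\}$; since $\sin(2\pi t)\ne0$ there, letting $n\to\infty$ yields \eqref{bernoulli-ratio-n2infty}. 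Combined with the monotonicity in $n$, this also records the sharp bounds $\frac{(2n+1)B_{2n}(t)}{B_{2n+1}(t)}<2\pi\cot(2\pi t)$ on $\bigl(0,\frac12\bigr)$ and $>2\pi\cot(2\pi t)$ on $\bigl(\frac12,1\bigr)$. The only place needing real thought is conceptual rather than computational: spotting that $\frac{(2n+1)B_{2n}(t)}{B_{2n+1}(t)}$ is a logarithmic derivative, which turns the discrete monotonicity in $n$ into a telescoping sum controlled by the $t$-monotonicity of $\bigl|\frac{B_{2n+3}(t)}{B_{2n+1}(t)}\bigr|$ already supplied by Theorem~\ref{bernou-ratio-assum}; everything else is sign bookkeeping and an appeal to \eqref{p594Entry24.11.5NIST-HB-2010}.
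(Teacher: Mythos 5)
Your proposal is correct and follows essentially the same route as the paper: direct differentiation with the sign facts \eqref{Bern-polyn-half-posit} for the $t$-monotonicity, the symmetry \eqref{Bern-polyn-symm-ID} for $\bigl(\frac12,1\bigr)$, Theorem~\ref{bernou-ratio-assum} for the monotonicity in $n$, and the asymptotics \eqref{p594Entry24.11.5NIST-HB-2010} for the limit. Your logarithmic-derivative telescoping is exactly the paper's identity $\frac{f_{2m}}{f_{2m+1}}-\frac{f_{2n}}{f_{2n+1}}=\frac{f_{2n+1}}{f_{2m+1}}\bigl[\frac{f_{2m+1}}{f_{2n+1}}\bigr]'$ specialized to consecutive indices, so the two arguments coincide in substance.
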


\begin{proof}
Let
\begin{equation}\label{f2n=f2n+1}
f_{2\ell}(s)=(-1)^{\ell+1}\frac{B_{2\ell}(s)}{(2\ell)!}\quad\text{and}\quad f_{2\ell+1}(s)=(-1)^{\ell+1}\frac{B_{2\ell+1}(s)}{(2\ell+1)!}
\end{equation}
for $\ell\in\mathbb{N}_0$.
Employing the recursive relation~\eqref{Bernou-polyn-deriv}, we derive
\begin{equation}\label{df2n-2n+1}
f_{2\ell+1}'(s)=f_{2\ell}(s)\quad\text{and}\quad f_{2\ell}'(s)=-f_{2\ell-1}(s).
\end{equation}
From~\eqref{Bern-polyn-half-posit}, we see that the function $f_{2\ell+1}(s)$ is positive for $\ell\in\mathbb{N}_0$ and $0<s<\frac{1}{2}$. Utilizing the relations in~\eqref{df2n-2n+1} and differentiating yield
\begin{equation*}
\biggl[\frac{f_{2\ell}(s)}{f_{2\ell+1}(s)}\biggr]'=\frac{-f_{2\ell-1}(s)f_{2\ell+1}(s)-f_{2\ell}^2(s)}{f_{2\ell+1}^2(s)}<0
\end{equation*}
for $s\in\bigl(0,\frac12\bigr)$ and $\ell\in\mathbb{N}$. Accordingly, the ratio
\begin{equation*}
\frac{f_{2\ell}(s)}{f_{2\ell+1}(s)}=\frac{(2\ell+1)B_{2\ell}(s)}{B_{2\ell+1}(s)}, \quad \ell\in\mathbb{N}
\end{equation*}
decreases in $s\in\bigl(0,\frac12\bigr)$.
In particular, it is obvious that the ratio
\begin{equation*}
\frac{f_0(s)}{f_1(s)}=\frac{B_0(s)}{B_1(s)}=\frac{2}{2s-1}
\end{equation*}
decreases in $s\in\bigl(0,\frac12\bigr)$. Hence, the ratio $\frac{B_{2\ell}(s)}{B_{2\ell+1}(s)}$ for $\ell\in\mathbb{N}_0$ decreases in $s\in\bigl(0,\frac12\bigr)$.
\par
Making use of the identity~\eqref{Bern-polyn-symm-ID} gives
\begin{equation*}
\frac{B_{2\ell}(s)}{B_{2\ell+1}(s)}=-\frac{B_{2\ell}(1-s)}{B_{2\ell+1}(1-s)}
\end{equation*}
for $\ell\in\mathbb{N}_0$ and $s\in\bigl(\frac{1}{2},1\bigr)$. As a result, the ratio $\frac{B_{2\ell}(s)}{B_{2\ell+1}(s)}$ for $\ell\in\mathbb{N}_0$ decreases in $s\in\bigl(\frac{1}{2},1\bigr)$.
\par
For $m,\ell\in\mathbb{N}_{0}$ with $m<\ell$, it is easy to see that
\begin{equation*}
\frac{f_{2m}(s)}{f_{2m+1}(s)}-\frac{f_{2\ell}(s)}{f_{2\ell+1}(s)}
=\frac{f_{2\ell+1}(s)}{f_{2m+1}(s)}\biggl[\frac{f_{2m+1}(s)}{f_{2\ell+1}(s)}\biggr]'.
\end{equation*}
From Corollary~\ref{B(2m-1)(2n-1)-cor}, it follows that the ratio
\begin{equation*}
\frac{f_{2m+1}(s)}{f_{2\ell+1}(s)}=(-1)^{\ell-m}\frac{(2\ell+1)!}{(2m+1)!}\frac{B_{2m+1}(s)}{B_{2\ell+1}(s)}
\end{equation*}
is positive and decreases in $s\in\bigl(0,\frac{1}{2}\bigr)$, and is positive and increases in $s\in\bigl(\frac{1}{2},1\bigr)$. Accordingly, we obtain
\begin{equation}\label{f-ratio-Bernoul-atio}
\biggl[\frac{f_{2m+1}(s)}{f_{2\ell+1}(s)}\biggr]'
\begin{dcases}
<0, & s\in\biggl(0,\frac{1}{2}\biggr)\\
>0, & s\in\biggl(\frac{1}{2},1\biggr)
\end{dcases}
\end{equation}
for $m,\ell\in\mathbb{N}_{0}$ with $m<\ell$.
Consequently, for given $s\in\bigl(0,\frac12\bigr)$ (or for given $s\in\bigl(\frac{1}{2},1\bigr)$, respectively), the sequence $\frac{(2\ell+1)B_{2\ell}(s)}{B_{2\ell+1}(s)}$ increases (or decreases, respectively) in $\ell\in\mathbb{N}_0$.
\par
By virtue of the asymptotic approximations in~\eqref{p594Entry24.11.5NIST-HB-2010}, we obtain
\begin{equation*}
\frac{(2\ell+1)B_{2\ell}(s)}{B_{2\ell+1}(s)}
\sim \frac{(2\ell+1)\cos(2\pi s)}{(-1)^{\lfloor \ell\rfloor-1}\frac{(2\pi)^{2\ell}}{2(2\ell)!}}
\frac{(-1)^{\lfloor \ell+1/2\rfloor-1}\frac{(2\pi)^{2\ell+1}}{2(2\ell+1)!}} {\sin(2\pi s)}
=2\pi\cot(2\pi s)
\end{equation*}
as $\ell\to\infty$ for $s\in(0,1)\setminus\bigl\{\frac{1}{2}\bigr\}$.
The required proof is complete.
\end{proof}

\begin{cor}
For $\ell\in\mathbb{N}_0$, the absolute polynomial $|B_{2\ell+1}(s)|$ is logarithmically concave in $s\in\bigl(0,\frac{1}{2}\bigr)$ and in $s\in\bigl(\frac{1}{2},1\bigr)$.
\end{cor}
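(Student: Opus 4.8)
The plan is to deduce this logarithmic concavity directly from the monotonicity already established in Theorem~\ref{T5}. First I would record that $B_{2n+1}(t)$ does not vanish on the two open intervals, so that $\ln|B_{2n+1}(t)|$ makes sense: on $\bigl(0,\frac12\bigr)$ this is the positivity~\eqref{Bern-polyn-half-posit}, and on $\bigl(\frac12,1\bigr)$ it follows because the symmetry~\eqref{Bern-polyn-symm-ID} gives $B_{2n+1}(t)=-B_{2n+1}(1-t)$ with $1-t\in\bigl(0,\frac12\bigr)$. Thus $\ln|B_{2n+1}(t)|$ is twice differentiable on each of $\bigl(0,\frac12\bigr)$ and $\bigl(\frac12,1\bigr)$.

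Next I would differentiate once. Using the recursion~\eqref{Bernou-polyn-deriv} in the form $B_{2n+1}'(t)=(2n+1)B_{2n}(t)$, we obtain
\begin{equation*}
\bigl[\ln|B_{2n+1}(t)|\bigr]'=\frac{B_{2n+1}'(t)}{B_{2n+1}(t)}=(2n+1)\,\frac{B_{2n}(t)}{B_{2n+1}(t)}.
\end{equation*}
By Theorem~\ref{T5}, the ratio $\frac{B_{2n}(t)}{B_{2n+1}(t)}$ is decreasing both in $t\in\bigl(0,\frac12\bigr)$ and in $t\in\bigl(\frac12,1\bigr)$; multiplying by the positive constant $2n+1$ preserves this. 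Hence $\bigl[\ln|B_{2n+1}(t)|\bigr]'$ is decreasing on each interval, i.e. $\bigl[\ln|B_{2n+1}(t)|\bigr]''\le0$, which is precisely the statement that $|B_{2n+1}(t)|$ is logarithmically concave on $\bigl(0,\frac12\bigr)$ and on $\bigl(\frac12,1\bigr)$.

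There is essentially no obstacle here: the corollary is just a reformulation of Theorem~\ref{T5} through the identity $\frac{\td}{\td t}\ln|B_{2n+1}(t)|=\frac{B_{2n+1}'(t)}{B_{2n+1}(t)}$. The only points needing a line of care are the non-vanishing of $B_{2n+1}(t)$ on the two intervals and the bookkeeping of the absolute value, both supplied by~\eqref{Bern-polyn-half-posit} and~\eqref{Bern-polyn-symm-ID}. If one prefers to avoid logarithms, the same conclusion can be stated as $B_{2n+1}''(t)B_{2n+1}(t)-[B_{2n+1}'(t)]^2\le0$, which is equivalent to $\bigl[B_{2n}(t)/B_{2n+1}(t)\bigr]'\le0$ after clearing denominators; but the logarithmic route is the shortest.
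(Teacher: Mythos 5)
Your proof is correct and follows essentially the same route as the paper: differentiate $\ln|B_{2n+1}(t)|$ via \eqref{Bernou-polyn-deriv} and invoke the decreasing property of $\frac{B_{2n}(t)}{B_{2n+1}(t)}$ from Theorem~\ref{T5}. The only cosmetic difference is that on $\bigl(\frac12,1\bigr)$ you apply Theorem~\ref{T5} directly, whereas the paper re-derives the same monotonicity there by passing through the symmetry \eqref{Bern-polyn-symm-ID}; both are valid.
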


\begin{proof}
On the interval $\bigl(0,\frac{1}{2}\bigr)$, we have
\begin{equation*}
[\ln |B_{2\ell+1}(s)|]'=\bigl(\ln\bigl[(-1)^{\ell+1}B_{2\ell+1}(s)\bigr]\bigr)'
=\frac{B_{2\ell+1}'(s)}{B_{2\ell+1}(s)}
=\frac{(2\ell+1)B_{2\ell}(s)}{B_{2\ell+1}(s)},
\end{equation*}
where we used the equality~\eqref{Bernou-polyn-deriv}, which, in light of the first paragraph of Theorem~\ref{T5}, decreases in $s\in\bigl(0,\frac{1}{2}\bigr)$ for $\ell\in\mathbb{N}_0$. As a result, the absolute polynomial $|B_{2\ell+1}(s)|$ is logarithmically concave in $s\in\bigl(0,\frac{1}{2}\bigr)$ for $\ell\in\mathbb{N}_0$.
\par
On the interval $\bigl(\frac{1}{2},1\bigr)$, by virtue of the identity~\eqref{Bern-polyn-symm-ID} and~\eqref{Bernou-polyn-deriv} in sequence, we have
\begin{gather*}
[\ln |B_{2\ell+1}(s)|]'=\bigl(\ln\bigl|(-1)^{\ell+1}B_{2\ell+1}(1-s)\bigr|\bigr)'
=\bigl(\ln\bigl[(-1)^{\ell+1}B_{2\ell+1}(1-s)\bigr]\bigr)'\\
=\frac{(-1)^{\ell+2}B_{2\ell+1}'(1-s)}{(-1)^{\ell+1}B_{2\ell+1}(1-s)}
=\frac{(-1)^{\ell+2}(2\ell+1)B_{2\ell}(1-s)}{(-1)^{\ell+1}B_{2\ell+1}(1-s)}
=-\frac{(2\ell+1)B_{2\ell}(1-s)}{B_{2\ell+1}(1-s)}
\end{gather*}
in $s\in\bigl(\frac{1}{2},1\bigr)$ for $\ell\in\mathbb{N}_0$, which, in light of the first paragraph of Theorem~\ref{T5}, decreases in $s\in\bigl(\frac{1}{2},1\bigr)$ for $\ell\in\mathbb{N}_0$. Consequently, the absolute polynomial $|B_{2\ell+1}(s)|$ is also logarithmically concave in $s\in\bigl(\frac{1}{2},1\bigr)$ for $\ell\in\mathbb{N}_0$. The required proof is complete.
\end{proof}

\begin{thm}\label{T3}
For $\ell\in\mathbb{N}$, let $r_{2\ell}$ be the unique zero of the Bernoulli polynomial $B_{2\ell}(s)$ on $\bigl(0,\frac12\bigr)$.
For $m,\ell\in\mathbb{N}_{0}$ of $m<\ell$, the ratio $(-1)^{\ell-m}\frac{B_{2m}(s)}{B_{2\ell}(s)}$ decreases in $s\in(0,r_{2\ell})$ and in $s\in\bigl(r_{2\ell},\frac12\bigr)$, while it increases in $s\in\bigl(\frac12,1-r_{2\ell}\bigr)$ and in $s\in(1-r_{2\ell},1)$.
\end{thm}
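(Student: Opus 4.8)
The plan is to reduce the statement on $\bigl(0,\frac12\bigr)$ to Corollary~\ref{B(2m-1)(2n-1)-cor} by means of Yang's $H$-function, and then to pass to $\bigl(\frac12,1\bigr)$ through the reflection identity~\eqref{Bern-polyn-symm-ID}. First I would set $u(t)=(-1)^{m+1}B_{2m}(t)$ and $v(t)=(-1)^{n+1}B_{2n}(t)$, so that $\frac{u(t)}{v(t)}=(-1)^{n-m}\frac{B_{2m}(t)}{B_{2n}(t)}$, and compute $u'(t)=(-1)^{m+1}(2m)B_{2m-1}(t)$ and $v'(t)=(-1)^{n+1}(2n)B_{2n-1}(t)$ from~\eqref{Bernou-polyn-deriv}. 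The positivity~\eqref{Bern-polyn-half-posit} applied to $B_{2n-1}$ gives $v'(t)=-(-1)^{n}(2n)B_{2n-1}(t)<0$ on $\bigl(0,\frac12\bigr)$, so $v$ is strictly decreasing there; combined with $v(0)=(-1)^{n+1}B_{2n}>0$ from~\eqref{Even-Bernoulli-posit}, with $v\bigl(\frac12\bigr)=-\bigl(1-2^{1-2n}\bigr)(-1)^{n+1}B_{2n}<0$ from~\eqref{Bernoulli-polyn-numb}, and with the uniqueness of the zero $r_{2n}$ in Lemma~\ref{Ostrowski-1960-Murch-lem}, this shows $v>0$ on $(0,r_{2n})$ and $v<0$ on $\bigl(r_{2n},\frac12\bigr)$. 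Since $v'\neq0$ throughout $\bigl(0,\frac12\bigr)$, the function $H_{u,v}$ of~\eqref{H(fg)(t)-dfn} is well defined there, and by~\eqref{H(fg)(t)fg-dfn} the derivative $\bigl[\frac{u}{v}\bigr]'=\frac{v'}{v^{2}}H_{u,v}$ on $(0,r_{2n})\cup\bigl(r_{2n},\frac12\bigr)$ has sign opposite to that of $H_{u,v}$.

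Next, assuming $m\ge1$, Corollary~\ref{B(2m-1)(2n-1)-cor} tells us that $\frac{u'(t)}{v'(t)}=\frac{m}{n}(-1)^{n-m}\frac{B_{2m-1}(t)}{B_{2n-1}(t)}$ is decreasing on $\bigl(0,\frac12\bigr)$, whence $H_{u,v}'(t)=\bigl[\frac{u'}{v'}\bigr]'(t)\,v(t)$ has sign opposite to that of $v(t)$: thus $H_{u,v}$ strictly decreases on $(0,r_{2n})$ and strictly increases on $\bigl(r_{2n},\frac12\bigr)$, so $H_{u,v}$ attains its minimum over $\bigl(0,\frac12\bigr)$ at $r_{2n}$. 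Because $v(r_{2n})=0$ while $v'(r_{2n})\neq0$, we get $H_{u,v}(r_{2n})=-u(r_{2n})$, and the key point is to show $u(r_{2n})<0$: since $u(0)=(-1)^{m+1}B_{2m}>0$, $u\bigl(\frac12\bigr)=-\bigl(1-2^{1-2m}\bigr)(-1)^{m+1}B_{2m}<0$, and $u$ has the single zero $r_{2m}$ in $\bigl(0,\frac12\bigr)$ by Lemma~\ref{Ostrowski-1960-Murch-lem}, we have $u<0$ on $\bigl(r_{2m},\frac12\bigr)$; the monotonicity of the zeros in Lemma~\ref{Ostrowski-1960-Murch-lem} forces $r_{2m}<r_{2n}<\frac12$, hence $u(r_{2n})<0$ and $H_{u,v}(r_{2n})>0$. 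Therefore $H_{u,v}>0$ on all of $(0,r_{2n})\cup\bigl(r_{2n},\frac12\bigr)$, and since $v'<0$ there we conclude $\bigl[\frac{u}{v}\bigr]'<0$, i.e., $(-1)^{n-m}\frac{B_{2m}(t)}{B_{2n}(t)}$ is decreasing on each of $(0,r_{2n})$ and $\bigl(r_{2n},\frac12\bigr)$. The degenerate case $m=0$ I would treat directly: then $u\equiv-1$, so $H_{u,v}\equiv1>0$ and the same conclusion holds.

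Finally, for $t\in\bigl(\frac12,1\bigr)$ the identity~\eqref{Bern-polyn-symm-ID} gives $(-1)^{n-m}\frac{B_{2m}(t)}{B_{2n}(t)}=(-1)^{n-m}\frac{B_{2m}(1-t)}{B_{2n}(1-t)}$ with $1-t\in\bigl(0,\frac12\bigr)$; since $B_{2n}$ vanishes on $\bigl(\frac12,1\bigr)$ precisely at $1-r_{2n}$, the order-reversing substitution $t\mapsto1-t$ carries $\bigl(\frac12,1-r_{2n}\bigr)$ onto $\bigl(r_{2n},\frac12\bigr)$ and $(1-r_{2n},1)$ onto $(0,r_{2n})$, turning the decreasing behaviour just established on the left half-interval into the asserted increasing behaviour on the right half-interval. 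I expect the main obstacle to be exactly the sign determination $u(r_{2n})<0$, equivalently the location of $r_{2n}$ to the right of the zero $r_{2m}$ of $B_{2m}$, which is the step where Lemma~\ref{Ostrowski-1960-Murch-lem} enters in an essential way; the rest is careful bookkeeping of signs together with the $H$-function identities~\eqref{H(fg)(t)-dfn} and~\eqref{H(fg)(t)fg-dfn} (in this route Lemma~\ref{AVV-1997-mon-rule} is not actually needed, since the positivity of $H_{u,v}$ simultaneously controls both subintervals).
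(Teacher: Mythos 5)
Your proposal is correct and follows essentially the same route as the paper's proof: up to the factorial normalization $f_{2k}(t)=(-1)^{k+1}B_{2k}(t)/(2k)!$, the paper also applies Yang's $H$-function to the pair $(f_{2m},f_{2n})$, uses Corollary~\ref{B(2m-1)(2n-1)-cor} to locate the minimum of $H$ at $r_{2n}$, deduces $H>0$ from $r_{2m}<r_{2n}$ (Lemma~\ref{Ostrowski-1960-Murch-lem}), and then transfers the result to $\bigl(\frac12,1\bigr)$ via the symmetry~\eqref{Bern-polyn-symm-ID}. The only cosmetic difference is your treatment of the case $m=0$ through $H\equiv1$, where the paper instead observes directly that $-1/f_{2n}(t)$ is decreasing.
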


\begin{proof}
In view of the notations in~\eqref{f2n=f2n+1} and with aid of the relations in~\eqref{df2n-2n+1}, we arrive at
\begin{equation*}
H_{f_{2m},f_{2\ell}}(s)=\frac{f_{2m-1}(s)}{f_{2\ell-1}(s)}f_{2\ell}(s)-f_{2m}(s),
\end{equation*}
where $H_{f,g}(s)$ is defined by~\eqref{H(fg)(s)-dfn}.
\par
Using the second relations in~\eqref{df2n-2n+1} and~\eqref{f2n=f2n+1} in sequence and employing~\eqref{Bern-polyn-half-posit}, we obtain
\begin{equation*}
f_{2\ell}'(s)=(-1)^{\ell+1}\frac{B_{2\ell-1}(s)}{(2\ell-1)!}<0
\end{equation*}
for $\ell\in\mathbb{N}$ and $0<s<\frac{1}{2}$. Hence, the function $f_{2\ell}(s)$ decreases in $s\in\bigl(0,\frac12\bigr)$ and, by virtue of Lemma~\ref{Ostrowski-1960-Murch-lem}, $f_{2\ell}(r_{2\ell})=0$ for $\ell\in\mathbb{N}$.
Making use of this and the conclusions in~\eqref{f-ratio-Bernoul-atio}, we acquire
\begin{equation*}
[H_{f_{2m},f_{2\ell}}(s)]'=\biggl[\frac{f_{2m-1}(s)}{f_{2\ell-1}(s)}\biggr]'f_{2\ell}(s)
\begin{dcases}
<0, & s\in(0,r_{2\ell}), \\
>0, & s\in\biggl(r_{2\ell},\frac12\biggr)
\end{dcases}
\end{equation*}%
and%
\begin{equation*}
H_{f_{2m},f_{2\ell}}(r_{2\ell}) =\frac{f_{2m-1}(r_{2\ell})}{f_{2\ell-1}(r_{2\ell})}f_{2\ell}(r_{2\ell}) -f_{2m}(r_{2\ell}) =-f_{2m}(r_{2\ell})
\end{equation*}
for $m,\ell\in\mathbb{N}$ such that $m<\ell$.
Since $r_{2\ell}>r_{2m}$ for $m,\ell\in\mathbb{N}$ such that $m<\ell$, then $f_{2m}(r_{2\ell})<f_{2m}(r_{2m})=0$ for $m,\ell\in\mathbb{N}$ such that $m<\ell$. Thus, we deduce
\begin{equation}\label{f2m-2n-r-2n}
H_{f_{2m},f_{2\ell}}(s)\ge H_{f_{2m},f_{2\ell}}(r_{2\ell})=-f_{2m}(r_{2\ell})>0
\end{equation}
for $0<s<\frac{1}{2}$ and $m,\ell\in\mathbb{N}$ such that $m<\ell$. Consequently, by~\eqref{H(fg)(s)fg-dfn}, we conclude that%
\begin{equation*}
\biggl[\frac{f_{2m}(s)}{f_{2\ell}(s)}\biggr]'=-\frac{f_{2\ell-1}(s)}{f_{2\ell}^2(s)}H_{f_{2m},f_{2\ell}}(s)<0
\end{equation*}%
for $s\in(0,r_{2\ell})\cup\bigl(r_{2\ell},\frac12\bigr)$ and $m,\ell\in\mathbb{N}$ such that $m<\ell$, where we utilized the fact that the function $f_{2\ell+1}(s)$ is positive for $\ell\in\mathbb{N}_0$ and $0<s<\frac{1}{2}$, which was derived in the proof of Theorem~\ref{T5}. This means that the ratio
\begin{equation*}
\frac{f_{2m}(s)}{f_{2\ell}(s)}=(-1)^{\ell-m}\frac{B_{2m}(s)}{B_{2\ell}(s)} \frac{(2\ell)!}{(2m)!}
\end{equation*}
for $m,\ell\in\mathbb{N}$ such that $m<\ell$ decreases in $s\in(0,r_{2\ell}) $ and $s\in\bigl(r_{2\ell},\frac12\bigr)$.
\par
When $m=0$ and $\ell\in\mathbb{N}$, it is clear that the ratio
\begin{equation*}
(-1)^{\ell-m}\frac{B_{2m}(s)}{B_{2\ell}(s)}
=\frac{1}{(-1)^\ell B_{2\ell}(s)}
=-\frac{1}{(2\ell)!}\frac{1}{f_{2\ell}(s)}
\end{equation*}
decreases in $s\in(0,r_{2\ell}) $ and in $s\in\bigl(r_{2\ell},\frac12\bigr)$.
\par
The rest can be proved by considering the symmetry expressed in~\eqref{Bern-polyn-symm-ID}.
\end{proof}

\begin{thm}\label{T6}
For $\ell\in\mathbb{N}$, the ratio $\frac{B_{2\ell}(s)}{B_{2\ell-1}(s)}$ increases in $s\in\bigl(0,\frac12\bigr)$ and in $s\in\bigl(\frac{1}{2},1\bigr)$.
\par
For fixed $s\in\bigl(0,\frac12\bigr)$ \textup{(}or for fixed $s\in\bigl(\frac{1}{2},1\bigr)$, respectively\textup{)}, the sequence $\frac{B_{2\ell}(s)}{\ell B_{2\ell-1}(s)}$ decreases \textup{(}or increases, respectively\textup{)} in $\ell\in\mathbb{N}$, with the limit
\begin{equation}\label{limit-Ber2n=2n-1-cot}
\lim_{\ell\to\infty}\frac{B_{2\ell}(s)}{\ell B_{2\ell-1}(s)}
=-\frac{\cot(2\pi s)}{\pi}.
\end{equation}
\end{thm}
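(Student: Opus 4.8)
The plan is to reduce everything to the single ratio $\dfrac{f_{2n}(t)}{f_{2n-1}(t)}$, with $f_{2n}$ and $f_{2n-1}$ as in~\eqref{f2n=f2n+1}. A one-line computation from~\eqref{f2n=f2n+1} gives
\begin{equation*}
\frac{B_{2n}(t)}{B_{2n-1}(t)}=-2n\,\frac{f_{2n}(t)}{f_{2n-1}(t)}
\qquad\text{and}\qquad
\frac{B_{2n}(t)}{nB_{2n-1}(t)}=-2\,\frac{f_{2n}(t)}{f_{2n-1}(t)},
\end{equation*}
so the first assertion is equivalent to $\dfrac{f_{2n}(t)}{f_{2n-1}(t)}$ being decreasing on $\bigl(0,\frac12\bigr)$, and the monotonicity part of the second assertion is equivalent to $\dfrac{f_{2n}(t)}{f_{2n-1}(t)}$ being increasing in $n$ for each fixed $t\in\bigl(0,\frac12\bigr)$. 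In both cases the interval $\bigl(\frac12,1\bigr)$ is then disposed of by the symmetry~\eqref{Bern-polyn-symm-ID}, which gives $\dfrac{B_{2n}(t)}{B_{2n-1}(t)}=-\dfrac{B_{2n}(1-t)}{B_{2n-1}(1-t)}$ and hence reverses the direction of monotonicity.

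For the monotonicity in $t$, I would apply Yang's $H$-function~\eqref{H(fg)(t)-dfn} to the ratio $\dfrac{f_{2n-1}(t)}{f_{2n}(t)}$ (rather than to $\dfrac{f_{2n}(t)}{f_{2n-1}(t)}$), because for this ratio the denominator's derivative $f_{2n}'(t)=-f_{2n-1}(t)$ is, by~\eqref{df2n-2n+1} and~\eqref{Bern-polyn-half-posit}, nonvanishing on $\bigl(0,\frac12\bigr)$. By~\eqref{df2n-2n+1} we have $\dfrac{f_{2n-1}'(t)}{f_{2n}'(t)}=-\dfrac{f_{2n-2}(t)}{f_{2n-1}(t)}=-(2n-1)\dfrac{B_{2n-2}(t)}{B_{2n-1}(t)}$, which is increasing on $\bigl(0,\frac12\bigr)$ by Theorem~\ref{T5} applied to the index $n-1$; hence $H_{f_{2n-1},f_{2n}}'(t)=\bigl[\frac{f_{2n-1}'(t)}{f_{2n}'(t)}\bigr]'f_{2n}(t)$ has the sign of $f_{2n}(t)$, which (as in the proof of Theorem~\ref{T3}) is positive on $(0,r_{2n})$ and negative on $\bigl(r_{2n},\frac12\bigr)$ because $f_{2n}$ decreases there and vanishes at $r_{2n}$. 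Therefore $H_{f_{2n-1},f_{2n}}$ attains its \emph{maximum} on $\bigl(0,\frac12\bigr)$ at $t=r_{2n}$, where, since $f_{2n}(r_{2n})=0$, it equals $-f_{2n-1}(r_{2n})<0$. Thus $H_{f_{2n-1},f_{2n}}(t)<0$ on all of $\bigl(0,\frac12\bigr)$, and by~\eqref{H(fg)(t)fg-dfn},
\begin{equation*}
\biggl[\frac{f_{2n-1}(t)}{f_{2n}(t)}\biggr]'=\frac{-f_{2n-1}(t)}{f_{2n}^2(t)}\,H_{f_{2n-1},f_{2n}}(t)>0
\end{equation*}
on $\bigl(0,\frac12\bigr)\setminus\{r_{2n}\}$; expanding this derivative produces the Tur\'an-type inequality $f_{2n-1}^2(t)+f_{2n-2}(t)f_{2n}(t)>0$ there, and it holds at $r_{2n}$ as well because $f_{2n-1}(r_{2n})>0$. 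Since $\bigl[\frac{f_{2n}(t)}{f_{2n-1}(t)}\bigr]'=-\bigl(f_{2n-1}^2(t)+f_{2n-2}(t)f_{2n}(t)\bigr)/f_{2n-1}^2(t)$, this last inequality gives precisely what is wanted.

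For the monotonicity in $n$, I would reuse the device behind Theorem~\ref{T5}: for $m,n\in\mathbb{N}$ with $m<n$, a direct computation via~\eqref{df2n-2n+1} gives
\begin{equation*}
\frac{f_{2m}(t)}{f_{2m-1}(t)}-\frac{f_{2n}(t)}{f_{2n-1}(t)}
=\frac{f_{2n}^2(t)}{f_{2m-1}(t)f_{2n-1}(t)}\biggl[\frac{f_{2m}(t)}{f_{2n}(t)}\biggr]'.
\end{equation*}
As $\dfrac{f_{2m}(t)}{f_{2n}(t)}$ is a positive multiple of $(-1)^{n-m}\dfrac{B_{2m}(t)}{B_{2n}(t)}$, Theorem~\ref{T3} shows it is decreasing on $\bigl(0,\frac12\bigr)\setminus\{r_{2n}\}$, and since $f_{2m-1},f_{2n-1}>0$ on $\bigl(0,\frac12\bigr)$ the right-hand side is negative there; at $t=r_{2n}$ the left-hand side is again negative because $f_{2n}(r_{2n})=0$ while $f_{2m}(r_{2n})<f_{2m}(r_{2m})=0$ (using $r_{2m}<r_{2n}$ and the monotonicity of $f_{2m}$) and $f_{2m-1}(r_{2n})>0$. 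Hence $\dfrac{f_{2m}(t)}{f_{2m-1}(t)}<\dfrac{f_{2n}(t)}{f_{2n-1}(t)}$ throughout $\bigl(0,\frac12\bigr)$, which is the required monotonicity in $n$. Finally, the limit~\eqref{limit-Ber2n=2n-1-cot} is read off from the asymptotic approximations~\eqref{p594Entry24.11.5NIST-HB-2010} in the same manner as in the closing paragraph of the proof of Theorem~\ref{T5}.

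The main obstacle is the $t$-monotonicity, and in particular the choice of which ratio to run the $H$-function on: feeding $\dfrac{f_{2n}(t)}{f_{2n-1}(t)}$ directly into~\eqref{H(fg)(t)fg-dfn} is obstructed by the zero of $f_{2n-1}'=f_{2n-2}$ at $r_{2n-2}$ (present once $n\ge2$); switching to $\dfrac{f_{2n-1}(t)}{f_{2n}(t)}$ repairs this, but then one must observe that $H_{f_{2n-1},f_{2n}}$ has a maximum — not a minimum — at its unique critical point $r_{2n}$, so that evaluating it there yields the correct (negative) sign. The decisive external ingredient is the decreasing property of $\dfrac{B_{2n-2}(t)}{B_{2n-1}(t)}$ from Theorem~\ref{T5}; granted that, the remainder is sign bookkeeping together with two uses of the reflection formula~\eqref{Bern-polyn-symm-ID}.
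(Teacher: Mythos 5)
Your proposal is correct, and its second half (the monotonicity in $n$ via the identity $\frac{f_{2m}(t)}{f_{2m-1}(t)}-\frac{f_{2n}(t)}{f_{2n-1}(t)} =\frac{f_{2n}^2(t)}{f_{2m-1}(t)f_{2n-1}(t)}\bigl[\frac{f_{2m}(t)}{f_{2n}(t)}\bigr]'$ together with Theorem~\ref{T3}, and the limit via~\eqref{p594Entry24.11.5NIST-HB-2010}) is essentially identical to the paper's proof; your explicit treatment of the point $t=r_{2n}$, using $f_{2m}(r_{2n})<0=f_{2n}(r_{2n})$, is a small refinement the paper glosses over. Where you genuinely diverge is the $t$-monotonicity. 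The paper differentiates $\frac{f_{2n}}{f_{2n-1}}$ directly, sets $g_n=-f_{2n-1}^2-f_{2n}f_{2n-2}$, shows $g_n'=-f_{2n}^2\bigl[\frac{f_{2n-2}}{f_{2n}}\bigr]'>0$ by invoking Theorem~\ref{T3} (the even/even ratio), and then bounds $g_n(t)<g_n\bigl(\frac12\bigr)=-f_{2n}\bigl(\frac12\bigr)f_{2n-2}\bigl(\frac12\bigr)<0$, i.e.\ it evaluates at the endpoint $\frac12$ where $f_{2n-1}$ vanishes. You instead run Yang's $H$-function on the reciprocal $\frac{f_{2n-1}}{f_{2n}}$, feed in the decreasingness of $\frac{B_{2n-2}(t)}{B_{2n-1}(t)}$ from Theorem~\ref{T5} (the even/odd ratio), and evaluate $H_{f_{2n-1},f_{2n}}$ at its interior maximum $r_{2n}$, where it equals $-f_{2n-1}(r_{2n})<0$. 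Since $H_{f_{2n-1},f_{2n}}=g_n/f_{2n-1}$, the two arguments are close cousins, but the key prior result differs (Theorem~\ref{T5} versus Theorem~\ref{T3}) and so does the evaluation point ($r_{2n}$ versus $\frac12$); your version makes the $t$-monotonicity depend only on the lighter Theorem~\ref{T5} (though Theorem~\ref{T3} is still needed for the $n$-monotonicity, so nothing is saved globally), while the paper's version avoids discussing the singularity of $\frac{f_{2n-1}}{f_{2n}}$ at $r_{2n}$ altogether — a point you do handle correctly via the Tur\'an-type inequality $f_{2n-1}^2+f_{2n-2}f_{2n}>0$. One phrase to tighten: on $\bigl(\frac12,1\bigr)$ the symmetry~\eqref{Bern-polyn-symm-ID} does \emph{not} reverse the $t$-monotonicity (the sign change and the reflection $t\mapsto1-t$ cancel, so the ratio is again increasing, as the theorem asserts); it is only the monotonicity in $n$ at fixed $t$ that flips.
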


\begin{proof}
In view of the notations in~\eqref{f2n=f2n+1} and with aid of the relations in~\eqref{df2n-2n+1}, directly differentiating yields
\begin{equation*}
\biggl[\frac{f_{2\ell}(s)}{f_{2\ell-1}(s)}\biggr]'=\frac{-f_{2\ell-1}^2(s)-f_{2\ell}(s)f_{2\ell-2}(s)}{f_{2\ell-1}^2(s)}
\triangleq \frac{g_\ell(s)}{f_{2\ell-1}^2(s)}
\end{equation*}
and
\begin{multline*}
g_\ell'(s)=-f_{2\ell-1}(s)f_{2\ell-2}(s) +f_{2\ell}(s)f_{2\ell-3}(s)\\
=-f_{2\ell}^2(s)\frac{f_{2\ell}(s)f_{2\ell-2}'(s)-f_{2\ell}'(s)f_{2\ell-2}(s)}{f_{2\ell}^2(s)}
=-f_{2\ell}^2(s)\biggl[\frac{f_{2\ell-2}(s)}{f_{2\ell}(s)}\biggr]'
\end{multline*}
for $s\in\bigl(0,\frac12\bigr)\setminus\{r_{2\ell}\}$ and $\ell\in\mathbb{N}$, where $r_{2\ell}$ is the unique zero of $B_{2\ell}(s)$ in $s\in\bigl(0,\frac12\bigr)$.
\par
By virtue of Theorem~\ref{T3}, we see that the ratio $\frac{f_{2\ell-2}(s)}{f_{2\ell}(s)}$ for $\ell\in\mathbb{N}$ decreases on $(0,r_{2\ell}) $ and $\bigl(r_{2\ell},\frac12\bigr)$. This implies that $g_\ell'(s)>0$ for $s\in\bigl(0,\frac12\bigr)\setminus\{r_{2\ell}\}$. Due to the continuity of $g_\ell'(s)$ on $\bigl(0,\frac12\bigr)$, we acquire $g_\ell'(s)>0$ for $s\in\bigl(0,\frac12\bigr)$, and then
\begin{equation*}
g_\ell(s)<g_\ell\biggl(\frac12\biggr) =-f_{2\ell}\biggl(\frac12\biggr) f_{2\ell-2}\biggl(\frac12\biggr) <0
\end{equation*}%
for $s\in\bigl(0,\frac12\bigr)$, which indicates that the function
\begin{equation*}
\frac{f_{2\ell}(s)}{f_{2\ell-1}(s)}=-\frac{B_{2\ell}(s)}{2\ell B_{2\ell-1}(s)}
\end{equation*}
decreases in $s\in\bigl(0,\frac12\bigr)$ for $\ell\in\mathbb{N}$. Equivalently, the ratio $\frac{B_{2\ell}(s)}{B_{2\ell-1}(s)}$ for $\ell\in\mathbb{N}$ increases in $s\in\bigl(0,\frac12\bigr)$.
\par
Theorem~\ref{T3} implies that, for $\ell>m\in\mathbb{N}$,
\begin{equation*}
\frac{f_{2\ell}^2(s)}{f_{2\ell-1}(s)f_{2m-1}(s)}\biggl[\frac{f_{2m}(s)}{f_{2\ell}(s)}\biggr]'
=\frac{f_{2m}(s)}{f_{2m-1}(s)}-\frac{f_{2\ell}(s)}{f_{2\ell-1}(s)}<0
\end{equation*}%
in $s\in\bigl(0,\frac12\bigr)\setminus\{r_{2\ell}\}$, which means that the sequence $\frac{f_{2\ell}(s)}{f_{2\ell-1}(s)}$ increases in $\ell\in\mathbb{N}$ for fixed $s\in\bigl(0,\frac12\bigr)$.
\par
The limit~\eqref{limit-Ber2n=2n-1-cot} follows from~\eqref{p594Entry24.11.5NIST-HB-2010}.
\par
The rest follows from applying the identity~\eqref{Bern-polyn-symm-ID}. The proof is complete.
\end{proof}

\section{Inequalities of Bernoulli polynomials and their ratios}\label{inequalities-sec-bernoulli}

In this section, using the monotonicity results established in Sections~\ref{Qi-probl-3rd-proof-sec} and~\ref{monot-two-bern-polyn-sec}, we derive several new inequalities and recover a number of known inequalities for the Bernoulli polynomials $B_\ell(s)$, the Bernoulli numbers $B_{2\ell}$, and their ratios such as $\frac{B_{2\ell+2}}{B_{2\ell}}$.

\begin{thm}\label{C-B2n+1-b1}
For $\ell\ge2$, the function $\frac{|B_{2\ell+1}(s)|}{s(1/2-s)(1-s)}$ increases in $s\in\bigl(0,\frac12\bigr)$ and decreases in $s\in\bigl(\frac12,1\bigr)$. Consequently, for $\ell\ge2$, the double inequality
\begin{equation}\label{B2n+1-b1}
2(2\ell+1)|B_{2\ell}|
<\frac{|B_{2\ell+1}(s)|}{s\bigl(\frac12-s\bigr)(1-s)}
<4\biggl(1-\frac{1}{2^{2\ell-1}}\biggr)(2\ell+1)|B_{2\ell}|
\end{equation}
validates in $s\in\bigl(0,\frac12\bigr)$ and reverses in $s\in\bigl(\frac{1}{2},1\bigr)$.
\par
For $\ell\ge2$, the inequality
\begin{equation}\label{B2n+1<I1a}
|B_{2\ell+1}(s)|<\frac{\sqrt{3}\,}{9} \biggl(1-\frac{1}{2^{2\ell-1}}\biggr)(2\ell+1)|B_{2\ell}|
\end{equation}%
validates for $s\in\bigl(0,\frac12\bigr)$.
\end{thm}

\begin{proof}
It is straightforward that
\begin{equation*}
B_{3}(s)=s\biggl(\frac{1}{2}-s\biggr)(1-s).
\end{equation*}
From Corollary~\ref{B(2m-1)(2n-1)-cor}, we see that the ratio
\begin{equation*}
(-1)^{\ell-2}\frac{B_3(s)}{B_{2\ell-1}(s)}
=(-1)^\ell\frac{s\bigl(\frac{1}{2}-s\bigr)(1-s)}{B_{2\ell-1}(s)}, \quad \ell>2
\end{equation*}
is positive and decreases in $s\in\bigl(0,\frac{1}{2}\bigr)$, meanwhile it is positive and increases in $s\in\bigl(\frac{1}{2},1\bigr)$. The monotonicity in Theorem~\ref{C-B2n+1-b1} is thus proved.
\par
By virtue of the L'H\^opital rule and the formulas~\eqref{Bernou-polyn-deriv} and~\eqref{Bernoulli-polyn-numb}, we acquire
\begin{equation*}
\lim_{s\to0^+}\frac{B_{2\ell+1}(s)}{s\bigl(\frac{1}{2}-s\bigr)(1-s)}
=2\lim_{s\to0^+}(2\ell+1)B_{2\ell}(s)
=2(2\ell+1)B_{2\ell}
\end{equation*}
and
\begin{multline*}
\lim_{s\to\frac12}\frac{B_{2\ell+1}(s)}{s\bigl(\frac{1}{2}-s\bigr)(1-s)}
=4\lim_{s\to\frac12}(2\ell+1)B_{2\ell}(s)\\
=-4(2\ell+1)B_{2\ell}\biggl(\frac{1}{2}\biggr)
=4(2\ell+1)\biggl(1-\frac{1}{2^{2\ell-1}}\biggr)B_{2\ell}
\end{multline*}
for $\ell\ge2$. Combining these two limits with the above monotonicity yields the double inequality~\eqref{B2n+1-b1}.
\par
Since the polynomial $B_3(s)$ reaches the maximum value $\frac{\sqrt{3}\,}{36}$ at $s=\frac{3-\sqrt{3}\,}{6}$ on $\bigl(0,\frac12\bigr)$, the right-hand side of the double inequality~\eqref{B2n+1-b1} becomes
\begin{multline*}
|B_{2\ell+1}(s)|
<4\biggl(1-\frac{1}{2^{2\ell-1}}\biggr)(2\ell+1)|B_{2\ell}|s\biggl(\frac12-s\biggr)(1-s)\\
\le4\biggl(1-\frac{1}{2^{2\ell-1}}\biggr)(2\ell+1)|B_{2\ell}|\frac{\sqrt{3}\,}{36}
=\frac{\sqrt{3}\,}{9} \biggl(1-\frac{1}{2^{2\ell-1}}\biggr) (2\ell+1)|B_{2\ell}|
\end{multline*}
for $\ell\ge2$ and $s\in\bigl(0,\frac12\bigr)$. The inequality~\eqref{B2n+1<I1a} is thus proved.
\end{proof}

\begin{thm}\label{C-B2n+1-b2}
For $\ell\in\mathbb{N}_{0}$, the double inequality
\begin{equation}\label{B2n+1-b2}
\biggl(1-\frac1{2^{2\ell-1}}\biggr)\frac{2\ell+1}{2\pi}|B_{2\ell}|\sin(2\pi s)
<|B_{2\ell+1}(s)|
<\frac{2\ell+1}{2\pi}|B_{2\ell}|\sin(2\pi s)
\end{equation}
validates in $s\in\bigl(0,\frac12\bigr)$ and reverses in $s\in\bigl(\frac{1}{2},1\bigr)$.
\end{thm}

\begin{proof}
By Corollary~\ref{B(2m-1)(2n-1)-cor}, we see that the double inequality
\begin{equation}\label{doub-ineq-bern}
(-1)^{\ell-m}\lim_{s\to(\frac{1}{2})^-}\frac{B_{2m-1}(s)}{B_{2\ell-1}(s)}
<(-1)^{\ell-m}\frac{B_{2m-1}(s)}{B_{2\ell-1}(s)}
<(-1)^{\ell-m}\lim_{s\to0^+}\frac{B_{2m-1}(s)}{B_{2\ell-1}(s)}
\end{equation}
holds on $\bigl(0,\frac{1}{2}\bigr)$ for $m,\ell\in\mathbb{N}$ such that $m<\ell$. Since, by the L'H\^opital rule,
\begin{equation*}
\lim_{s\to\frac{1}{2}}\frac{B_{2m-1}(s)}{B_{2\ell-1}(s)}
=\lim_{s\to\frac{1}{2}}\frac{(2m-1)B_{2m-2}(s)}{(2\ell-1)B_{2\ell-2}(s)}
=\frac{(2m-1)B_{2m-2}\bigl(\frac{1}{2}\bigr)}{(2\ell-1)B_{2\ell-2}\bigl(\frac{1}{2}\bigr)}
\end{equation*}
and
\begin{equation*}
\lim_{s\to0^+}\frac{B_{2m-1}(s)}{B_{2\ell-1}(s)}
=\lim_{s\to0^+}\frac{(2m-1)B_{2m-2}(s)}{(2\ell-1)B_{2\ell-2}(s)}
=\frac{(2m-1)B_{2m-2}}{(2\ell-1)B_{2\ell-2}}
\end{equation*}
for $m,\ell\in\mathbb{N}$ such that $m<\ell$, the double inequality~\eqref{doub-ineq-bern} is equivalent to
\begin{multline}\label{Bernoulli-double-2m-1-ineq}
(-1)^{m}(2m-1)B_{2m-2}\biggl(\frac{1}{2}\biggr)\frac{B_{2\ell-1}(s)}{(2\ell-1)B_{2\ell-2}\bigl(\frac{1}{2}\bigr)}
<|B_{2m-1}(s)|\\
<(-1)^{m}(2m-1)B_{2m-2}\frac{B_{2\ell-1}(s)}{(2\ell-1)B_{2\ell-2}}
\end{multline}
on $\bigl(0,\frac{1}{2}\bigr)$ for $m,\ell\in\mathbb{N}$ such that $m<\ell$.
\par
When taking $\ell\to\infty$, by asymptotic approximations in~\eqref{p594Entry24.11.5NIST-HB-2010}, we obtain
\begin{equation*}
\frac{B_{2\ell-1}(s)}{(2\ell-1)B_{2\ell-2}\bigl(\frac{1}{2}\bigr)}\sim \frac{(-1)^{\lfloor \ell-1/2\rfloor-1}\frac{2(2\ell-1)!}{(2\pi)^{2\ell-1}}\sin(2\pi s)} {(2\ell-1)(-1)^{\lfloor \ell-1\rfloor-1}\frac{2(2\ell-2)!}{(2\pi)^{2\ell-2}}\cos\pi}
=-\frac{\sin(2\pi s)}{2\pi}
\end{equation*}
and
\begin{equation*}
\frac{B_{2\ell-1}(s)}{(2\ell-1)B_{2\ell-2}}\sim \frac{(-1)^{\lfloor \ell-1/2\rfloor-1}\frac{2(2\ell-1)!}{(2\pi)^{2\ell-1}}\sin(2\pi s)} {(2\ell-1)(-1)^{\lfloor \ell-1\rfloor-1}\frac{2(2\ell-2)!}{(2\pi)^{2\ell-2}}\cos 0}
=\frac{\sin(2\pi s)}{2\pi}
\end{equation*}
on $\bigl(0,\frac{1}{2}\bigr)$. Applying these two results to the double inequality~\eqref{Bernoulli-double-2m-1-ineq} and utilizing~\eqref{Bernoulli-polyn-numb} yield
\begin{multline}\label{Bern-doub-2m-1-ineq}
(-1)^{m+1}(2m-1)B_{2m-2}\biggl(\frac{1}{2}\biggr)\frac{\sin(2\pi s)}{2\pi}\\
=(-1)^{m}(2m-1)\biggl(1-\frac{1}{2^{2m-3}}\biggr)B_{2m-2}\frac{\sin(2\pi s)}{2\pi}\\
<|B_{2m-1}(s)|
<(-1)^{m}(2m-1)B_{2m-2}\frac{\sin(2\pi s)}{2\pi}
\end{multline}
for $m\in\mathbb{N}$. Replacing $m$ by $\ell+1$ in~\eqref{Bern-doub-2m-1-ineq} leads to the double inequality~\eqref{B2n+1-b2}.
\par
By the same argument as above and in view of the identity~\eqref{Bern-polyn-symm-ID}, we can derive the reversed version of~\eqref{B2n+1-b2} on $\bigl(\frac{1}{2},1\bigr)$ for $\ell\in\mathbb{N}_{0}$. The required proof is thus complete.
\end{proof}

\begin{rem}
Proposition~4.3 in~\cite{Kouba-arxiv-2016} reads that, for every positive integer $\ell$,
\begin{equation}\label{Kouba-arxiv-2016-ineq1}
\sup_{x\in[0,1]}|B_{2\ell+1}(s)| <\frac{2\ell+1}{2\pi}|B_{2\ell}|
\end{equation}
and
\begin{equation}\label{Kouba-arxiv-2016-ineq2}
\biggl|B_{2\ell+1}\biggl(\frac{1}{4}\biggr)\biggr|\ge\biggl(1-\frac{4}{2^{2\ell}}\biggr)\frac{2\ell+1}{2\pi}|B_{2\ell}|.
\end{equation}
\par
From Theorem~\ref{C-B2n+1-b2}, it follows that
\begin{align*}
|B_{2\ell+1}(s)|&\le \max\biggl\{\frac{2\ell+1}{2\pi}|B_{2\ell}|\sin(2\pi s),\biggl(1-\frac1{2^{2\ell-1}}\biggr)\frac{2\ell+1}{2\pi}|B_{2\ell}|\sin(2\pi s)\biggr\}\\
&\le\frac{2\ell+1}{2\pi}|B_{2\ell}|
\end{align*}
for $s\in(0,1)$ and $\ell\in\mathbb{N}_{0}$. This means that Theorem~\ref{C-B2n+1-b2} improves the inequality~\eqref{Kouba-arxiv-2016-ineq1}.
\par
Taking $s=\frac14$ in~\eqref{B2n+1-b2} gives
\begin{equation*}
\biggl(1-\frac{2}{2^{2\ell}}\biggr)\frac{2\ell+1}{2\pi}|B_{2\ell}|
<\biggl|B_{2\ell+1}\biggl(\frac{1}{4}\biggr)\biggr|
<\frac{2\ell+1}{2\pi}|B_{2\ell}|,\quad \ell\in\mathbb{N}_0.
\end{equation*}
This means that Theorem~\ref{C-B2n+1-b2} improves the inequality~\eqref{Kouba-arxiv-2016-ineq2}.
\par
In conclusion, Theorem~\ref{C-B2n+1-b2} in this paper is better than~\cite[Proposition~4.3]{Kouba-arxiv-2016}.
\end{rem}

\begin{thm}
For $\ell\in\mathbb{N}_{0}$, we have
\begin{equation}\label{B2n-b3a}
(-1)^{\ell+1}B_{2\ell}(s)<|B_{2\ell}|\cos(2\pi s), \quad s\in\biggl(0,\frac14\biggr)
\end{equation}
and
\begin{equation}\label{B2n-b3b}
(-1)^{\ell+1}B_{2\ell}(s)<\biggl(1-\frac1{2^{2\ell-1}}\biggr)|B_{2\ell}|\cos(2\pi s), \quad s\in \biggl(\frac14,\frac12\biggr).
\end{equation}
\end{thm}

\begin{proof}
From Theorem~\ref{T3}, it follows that
\begin{equation*}
(-1)^{\ell-m}\frac{B_{2m}(s)}{B_{2\ell}(s)}<(-1)^{\ell-m}\frac{B_{2m}}{B_{2\ell}}, \quad s\in(0,r_{2\ell})
\end{equation*}
and
\begin{equation*}
(-1)^{\ell-m}\frac{B_{2m}(s)}{B_{2\ell}(s)}>(-1)^{\ell-m}\frac{B_{2m}\bigl(\frac12\bigr)}{B_{2\ell}\bigl(\frac12\bigr)}, \quad s\in\biggl(r_{2\ell},\frac12\biggr)
\end{equation*}
for $m,\ell\in\mathbb{N}_{0}$ such that $m<\ell$.
They can be reformulated as
\begin{equation*}
(-1)^{m+1}B_{2m}(s)<\frac{B_{2\ell}(s)}{B_{2\ell}}|B_{2m}|, \quad s\in(0,r_{2\ell})
\end{equation*}
and
\begin{equation*}
(-1)^{m+1}B_{2m}(s)<\frac{B_{2\ell}(s)}{B_{2\ell}\bigl(\frac12\bigr)}(-1)^{m+1}B_{2m}\biggl(\frac12\biggr),\quad s\in\biggl(r_{2\ell},\frac12\biggr)
\end{equation*}
for $m,\ell\in\mathbb{N}_{0}$ such that $m<\ell$, where we used the fact that the function $f_{2\ell}(s)$ decreases in $s\in\bigl(0,\frac12\bigr)$ and $f_{2\ell}(r_{2\ell})=0$ for $\ell\in\mathbb{N}$.
Making use of asymptotic approximations in~\eqref{p594Entry24.11.5NIST-HB-2010} gives, as $\ell\to\infty $,
\begin{equation}\label{sim-even-even1}
\frac{B_{2\ell}(s)}{B_{2\ell}}\sim \frac{(-1)^{\lfloor \ell\rfloor-1}\frac{2(2\ell)!}{(2\pi)^{2\ell}}\cos(2\pi s)} {(-1)^{\lfloor \ell\rfloor-1}\frac{2(2\ell)!}{(2\pi)^{2\ell}}\cos0}=\cos(2\pi s)
\end{equation}
and
\begin{equation}\label{sim-even-even2}
\frac{B_{2\ell}(s)}{B_{2\ell}\bigl(\frac12\bigr)}\sim \frac{(-1)^{\lfloor \ell\rfloor-1}\frac{2(2\ell)!}{(2\pi)^{2\ell}}\cos(2\pi s)} {(-1)^{\lfloor \ell\rfloor-1}\frac{2(2\ell)!}{(2\pi)^{2\ell}}\cos\pi}=-\cos(2\pi s).
\end{equation}
Then, due to the fact $r_{2\ell}\to\frac14$ as $\ell\to\infty$,
\begin{equation*}
(-1)^{m+1}B_{2m}(s)<|B_{2m}|\cos(2\pi s),\quad s\in\biggl(0,\frac14\biggr)
\end{equation*}
and
\begin{equation*}
(-1)^{m+1}B_{2m}(s)<\biggl(1-\frac1{2^{2m-1}}\biggr)|B_{2m}|\cos(2\pi s), \quad s\in\biggl(\frac14,\frac12\biggr).
\end{equation*}
The required proof is complete.
\end{proof}

\begin{thm}\label{C-B2n-b2}
For $s\in\bigl(0,\frac12\bigr)\cup\bigl(\frac12,1\bigr)$, we have
\begin{equation}\label{B2n-b4a}
\ell(2\ell-1)|B_{2\ell-2}|<(-1)^\ell\frac{B_{2\ell}(s)-B_{2\ell}}{s^2(1-s)^2}
<32\biggl(1-\frac{1}{2^{2\ell}}\biggr) |B_{2\ell}|, \quad \ell\ge3
\end{equation}
and
\begin{multline}\label{B2n-b4b}
8\biggl(1-\frac{1}{2^{2\ell}}\biggr) |B_{2\ell}|<(-1)^{\ell+1}\frac{B_{2\ell}(s)-B_{2\ell}\bigl(\frac12\bigr)}{\bigl(s-\frac12\bigr)^2}\\
<\ell(2\ell-1) \biggl(1-\frac1{2^{2\ell-3}}\biggr) |B_{2\ell-2}|, \quad \ell\ge2.
\end{multline}
\end{thm}

\begin{proof}
It is straightforward that
\begin{equation*}
B_{4}(s)-B_{4}=s^2(s-1)^2
\quad\text{and}\quad
B_{2}(s) -B_{2}\biggl(\frac12\biggr)=\biggl(s-\frac12\biggr)^2.
\end{equation*}
By Corollary~\ref{T4}, the functions
\begin{equation*}
\frac{(-1)^\ell[B_{2\ell}(s)-B_{2\ell}]}{s^2(s-1)^2}, \quad \ell\ge 3
\end{equation*}
and
\begin{equation*}
\frac{(-1)^{\ell+1}\bigl[B_{2\ell}(s)-B_{2\ell}\bigl(\frac12\bigr)\bigr]}{\bigl(s-\frac12\bigr)^2},\quad \ell\ge 2
\end{equation*}
increase in $s\in\bigl(0,\frac12\bigr)$ and decrease in $s\in\bigl(\frac12,1\bigr)$. With help of the L'H\^opital rule and by virtue of the formulas~\eqref{Bernou-polyn-deriv} and~\eqref{Bernoulli-polyn-numb}, we arrive at the limits
\begin{align*}
\lim_{s\to\frac12}\frac{(-1)^\ell[B_{2\ell}(s) -B_{2\ell}]}{s^2(s-1)^2} &=32\biggl(1-\frac{1}{2^{2\ell}}\biggr)|B_{2\ell}|,\\
\lim_{s\to0^+}\frac{(-1)^{\ell+1}[B_{2\ell}(s) -B_{2\ell}\bigl(\frac12\bigr)\bigr]} {\bigl(s-\frac12\bigr)^2}
&=8\biggl(1-\frac1{2^{2\ell}}\biggr)|B_{2\ell}|\\
&=\lim_{s\to1^-}\frac{(-1)^{\ell+1}[B_{2\ell}(s) -B_{2\ell}\bigl(\frac12\bigr)\bigr]} {\bigl(s-\frac12\bigr)^2},\\
\lim_{s\to\frac12}\frac{(-1)^{\ell+1}\bigl[B_{2\ell}(s) -B_{2\ell}\bigl(\frac12\bigr)\bigr]}{\bigl(s-\frac12\bigr)^2}
&=\ell(2\ell-1) \biggl(1-\frac1{2^{2\ell-3}}\biggr)|B_{2\ell-2}|
\end{align*}
for $\ell\in\mathbb{N}$ and the limits
\begin{equation*}
\lim_{s\to0^+}\frac{(-1)^\ell[B_{2\ell}(s)-B_{2\ell}]}{s^2(s-1)^2}
=\ell(2\ell-1)|B_{2\ell-2}|
=\lim_{s\to1^-}\frac{(-1)^\ell[B_{2\ell}(s)-B_{2\ell}]}{s^2(s-1)^2}
\end{equation*}
for $\ell\ge2$, where we excluded the case $\lim_{s\to0^+}B_{1}(s)=B_1=-\frac{1}{2}\ne0$.
Hence, the double inequalities~\eqref{B2n-b4a} and~\eqref{B2n-b4b} are proved. The proof of Theorem~\ref{C-B2n-b2} is thus complete.
\end{proof}

\begin{rem}
Since $s^2(1-s)^2<\frac1{16}$ for $s\in\bigl(0,\frac12\bigr)\cup\bigl(\frac12,1\bigr)$, the right-hand side of the inequality~\eqref{B2n-b4a} can be weaken to%
\begin{equation}\label{Bernou-diff-ineq>3}
(-1)^\ell[B_{2\ell}(s)-B_{2\ell}]<2\biggl(1-\frac1{2^{2\ell}}\biggr) |B_{2\ell}|,\quad \ell\ge3
\end{equation}
for $s\in\bigl(0,\frac12\bigr)\cup\bigl(\frac12,1\bigr)$. Employing the fact that the function $f_{2\ell}(s)=(-1)^{\ell+1}\frac{B_{2\ell}(s)}{(2\ell)!}$ decreases in $s\in\bigl(0,\frac12\bigr)$ for $\ell\in\mathbb{N}$, which was proved in the proof of Theorem~\ref{T3}, considering the equality~\eqref{Bern-polyn-symm-ID}, and concretely computing the cases $\ell=1,2$ and the cases $s=0,\frac{1}{2},1$, we can reformulate the inequality~\eqref{Bernou-diff-ineq>3} as
\begin{equation}\label{Bernou-diff-ineq-rewr}
|B_{2\ell}(s)-B_{2\ell}|\le\biggl(2-\frac{1}{2^{2\ell-1}}\biggr)|B_{2\ell}|,\quad 1\ge s\ge0, \quad \ell\in\mathbb{N}.
\end{equation}
We emphasize that the inequality~\eqref{Bernou-diff-ineq-rewr} appeared in~\cite[p.~593, Entry~24.9.2]{NIST-HB-2010}.
\end{rem}

\begin{rem}
We can rewrite the right-hand side inequality of~\eqref{B2n-b4a} as
\begin{equation}\label{Lower-One-Yang}
(-1)^{\ell+1}B_{2\ell}(s)
>\biggl[1-32\biggl(1-\frac{1}{2^{2\ell}}\biggr)s^2(1-s)^2\biggr]|B_{2\ell}|, \quad \ell\ge3
\end{equation}
for $s\in\bigl(0,\frac12\bigr)\cup\bigl(\frac12,1\bigr)$. Utilizing the identity~\eqref{Bernoulli-polyn-numb}, we can also rewrite the left-hand side of the double inequality~\eqref{B2n-b4b} as
\begin{equation}\label{Lower-Two-Yang}
(-1)^{\ell+1}B_{2\ell}(s)
>\biggl[8\biggl(1-\frac{1}{2^{2\ell}}\biggr) \biggl(s-\frac12\biggr)^2-\biggl(1-\frac{1}{2^{2\ell-1}}\biggr)\biggr]|B_{2\ell}|, \quad \ell\ge2
\end{equation}
for $s\in\bigl(0,\frac12\bigr)\cup\bigl(\frac12,1\bigr)$. The lower bound in~\eqref{Lower-One-Yang} is bigger than the corresponding one in~\eqref{Lower-Two-Yang} for $s\in\bigl(0,\frac12\bigr)\cup\bigl(\frac12,1\bigr)$ and $\ell\in\mathbb{N}$.
\end{rem}

\begin{rem}
On the left-hand side of the double inequality~\eqref{B2n-b4a}, taking $s\to\frac{1}{2}$ leads to
\begin{equation*}
\frac{\ell(2\ell-1)}{16}|B_{2\ell-2}|\le(-1)^\ell\biggl[B_{2\ell}\biggl(\frac{1}{2}\biggr)-B_{2\ell}\biggr], \quad \ell\ge3.
\end{equation*}
On the right-hand side of the double inequality~\eqref{B2n-b4b}, letting $s\to0^+$ results in
\begin{equation*}
(-1)^{\ell+1}\biggl[B_{2\ell}-B_{2\ell}\biggl(\frac12\biggr)\biggr]
\le\frac{\ell(2\ell-1)}{4} \biggl(1-\frac1{2^{2\ell-3}}\biggr) |B_{2\ell-2}|, \quad \ell\ge2.
\end{equation*}
Further utilizing the identity~\eqref{Bernoulli-polyn-numb} yields
\begin{equation}\label{(29)(30)-ineq-ratio-Bern}
\frac{2^{2\ell+2}}{2^{2\ell+2}-1}\frac{(\ell+1)(2\ell+1)}{32}
\le\biggl|\frac{B_{2\ell+2}}{B_{2\ell}}\biggr|
\le \frac{2^{2\ell+2}-8}{2^{2\ell+2}-1} \frac{(\ell+1)(2\ell+1)}{8}
\end{equation}
for $\ell\in\mathbb{N}$. Except the case $\ell=1$ on the left-hand side, the double inequality~\eqref{(29)(30)-ineq-ratio-Bern} is weaker than
\begin{equation}\label{ineq-Bernou-equiv}
\frac{2^{2\ell}-2} {2^{2\ell+1}-1}\frac{(\ell+1)(2\ell+1)}{\pi^2}
<\biggl|\frac{B_{2\ell+2}}{B_{2\ell}}\biggr|
<\frac{2^{2\ell+1}-2}{2^{2\ell+2}-1}\frac{(\ell+1)(2\ell+1)}{\pi^2}
\end{equation}
for $\ell\in\mathbb{N}$, which was first established in~\cite[Theorem~1.1]{CAM-D-18-00067.tex}.
\par
We note that the bounds in the double inequality~\eqref{(29)(30)-ineq-ratio-Bern} are rational, whereas the lower and upper bounds in~\eqref{ineq-Bernou-equiv} are irrational.
\par
Moreover, the double inequality~\eqref{ineq-Bernou-equiv} has been elegantly generalized in~\cite{Filomat1193.tex, Rem-Bern-Ratio-Ineq.tex, Yang-Tian-JCAM-2020, Zhu-RACSAM-2020}, concisely reviewed in~\cite{MIA-9509.tex, RCSM-D-21-00302.tex}, and independently cited in~\cite{Qi-collected-12, Qi-collected-4, Qi-collected-23, Qi-collected-13, Qi-collected-34, Bagul-Du-2025, Qi-collected-24, Qi-collected-35, Qi-collected-14, Qi-collected-15, Qi-collected-16, Qi-collected-2, Qi-collected-36, Qi-collected-25, Qi-collected-47, Qi-collected-48, FerrariF-mcom-4062, Qi-collected-3, ESAIM-Ferrari, arXiv.2411.00650, Qi-collected-1, Qi-collected-5, Qi-collected-26, Qi-collected-17, Qi-collected-49, Gulf-Nantomah, Qi-collected-6, Qi-collected-7, Qi-collected-27, arXiv:2508.17938IMRN, arXiv:2508.17938, Qi-collected-28, Qi-collected-37, OpenJMA2026, RGMIA-2024, Xu-Open2026, Qi-collected-38, Yang-JMAA-Corrigendum, arXiv:2507.10954, Qi-collected-8, Qi-collected-9, Zhao-AADM-25, Qi-collected-10, Qi-collected-29, Qi-collected-40, Qi-collected-30, Qi-collected-41, Qi-collected-18, Qi-collected-31, Qi-collected-43, Qi-collected-19, Qi-collected-20, Qi-collected-21, Qi-collected-50, Qi-collected-44, Qi-collected-32, Qi-collected-33, Qi-collected-11, Qi-collected-51, Qi-collected-45, Qi-collected-22}, for example. These independent citations highlight the importance and catalytic role of the double inequality~\eqref{ineq-Bernou-equiv} in mathematics, physics, and other scientific disciplines.
\end{rem}

\begin{thm}
For $\ell\in\mathbb{N}$, the double inequality
\begin{multline}\label{B2n-b5a}
\frac{\ell(2\ell-1)} {2\pi^2}\biggl(1-\frac1{2^{2\ell-3}}\biggr)|B_{2\ell-2}|[1+\cos(2\pi s)] -\biggl(1-\frac1{2^{2\ell-1}}\biggr)|B_{2\ell}|\\
<(-1)^{\ell+1}B_{2\ell}(s)
<\frac{1+\bigl(2^{2\ell}-1\bigr)\cos(2\pi s)}{2^{2\ell}}|B_{2\ell}|
\end{multline}
validates for $s\in\bigl(0,\frac12\bigr)\cup\bigl(\frac{1}{2},1\bigr)$. For $\ell\ge2$, the inequality
\begin{equation}\label{B2n-b5b}
(-1)^{\ell+1}B_{2\ell}(s)>|B_{2\ell}|-\frac{\ell(2\ell-1)}{2\pi^2}|B_{2\ell-2}|[1-\cos(2\pi s)]
\end{equation}
validates for $s\in(0,1)$. When $\ell=1$, the inequality~\eqref{B2n-b5b} reverses for $s\in(0,1)$.
\end{thm}

\begin{proof}
By virtue of the L'H\^opital rule and with aid of the formulas~\eqref{Bernou-polyn-deriv} and~\eqref{Bernoulli-polyn-numb}, a direct computation yields
\begin{equation*}
\lim_{s\to0^+}\frac{B_{2m}(s)-B_{2m}}{B_{2\ell}(s)-B_{2\ell}}
=\frac{m(2m-1)B_{2m-2}}{\ell(2\ell-1) B_{2\ell-2}}
\end{equation*}
and
\begin{equation*}
\lim_{s\to\frac12}\frac{B_{2m}(s) -B_{2m}\bigl(\frac12\bigr)}{B_{2\ell}(s)-B_{2\ell}\bigl(\frac12\bigr)}
=\frac{m(2m-1) \bigl(1-\frac1{2^{2m-3}}\bigr)B_{2m-2}}{\bigl(1-\frac1{2^{2\ell-3}}\bigr)\ell(2\ell-1) B_{2\ell-2}}
\end{equation*}
for $\ell>m\ge2$, where we excluded the case $\lim_{s\to0^+}B_{1}(s)=B_1=-\frac{1}{2}\ne0$, as well as
\begin{equation*}
\lim_{s\to\frac12}\frac{B_{2m}(s) -B_{2m}}{B_{2\ell}(s)-B_{2\ell}}
=\frac{\bigl(1-\frac1{2^{2m}}\bigr)B_{2m}}{\bigl(1-\frac1{2^{2\ell}}\bigr) B_{2\ell}}
=\lim_{s\to0^+}\frac{B_{2m}(s) -B_{2m}\bigl(\frac12\bigr)}{B_{2\ell}(s)-B_{2\ell}\bigl(\frac12\bigr)}
\end{equation*}
for $\ell>m\in\mathbb{N}$.
Applying the decreasing property given in Corollary~\ref{T4}, we derive
\begin{equation*}
(-1)^{\ell-m}\frac{\bigl(1-\frac1{2^{2m}}\bigr)B_{2m}}{\bigl(1-\frac1{2^{2\ell}}\bigr) B_{2\ell}}
<(-1)^{\ell-m}\frac{B_{2m}(s)-B_{2m}}{B_{2\ell}(s)-B_{2\ell}}
<(-1)^{\ell-m}\frac{m(2m-1)B_{2m-2}}{\ell(2\ell-1) B_{2\ell-2}}
\end{equation*}
and
\begin{align*}
(-1)^{\ell-m}\frac{m(2m-1) \bigl(1-\frac1{2^{2m-3}}\bigr)B_{2m-2}}{\bigl(1-\frac1{2^{2\ell-3}}\bigr)\ell(2\ell-1) B_{2\ell-2}}
&<(-1)^{\ell-m}\frac{B_{2m}(s) -B_{2m}\bigl(\frac12\bigr)}{B_{2\ell}(s)-B_{2\ell}\bigl(\frac12\bigr)}\\
&<(-1)^{\ell-m}\frac{\bigl(1-\frac1{2^{2m}}\bigr)B_{2m}}{\bigl(1-\frac1{2^{2\ell}}\bigr) B_{2\ell}}
\end{align*}
for $\ell>m\ge2$ and $s\in\bigl(0,\frac12\bigr)$. By the fact that the function $f_{2\ell}(s)=(-1)^{\ell+1}\frac{B_{2\ell}(s)}{(2\ell)!}$ decreases in $s\in\bigl(0,\frac12\bigr)$ for $\ell\in\mathbb{N}$, which was proved in the proof of Theorem~\ref{T3}, we can reformulate the above two double inequalities as
\begin{multline}\label{n2infty-ineq1}
(-1)^{m}\biggl(1-\frac1{2^{2m}}\biggr)B_{2m}\frac{B_{2\ell}(s)-B_{2\ell}}{\bigl(1-\frac1{2^{2\ell}}\bigr) B_{2\ell}}
<(-1)^{m}[B_{2m}(s)-B_{2m}]\\
<(-1)^{m}m(2m-1)B_{2m-2}\frac{B_{2\ell}(s)-B_{2\ell}}{\ell(2\ell-1) B_{2\ell-2}}
\end{multline}
and
\begin{multline}\label{n2infty-ineq2}
(-1)^{m+1}m(2m-1) \biggl(1-\frac1{2^{2m-3}}\biggr)B_{2m-2}\frac{B_{2\ell}(s)-B_{2\ell}\bigl(\frac12\bigr)}{\bigl(1-\frac1{2^{2\ell-3}}\bigr)\ell(2\ell-1) B_{2\ell-2}}\\
<(-1)^{m+1}\biggl[B_{2m}(s) -B_{2m}\biggl(\frac12\biggr)\biggr]
<\biggl(1-\frac1{2^{2m}}\biggr)|B_{2m}|\frac{B_{2\ell}(s)-B_{2\ell}\bigl(\frac12\bigr)}{\bigl(1-\frac1{2^{2\ell}}\bigr) B_{2\ell}}
\end{multline}
for $\ell>m\ge2$ and $s\in\bigl(0,\frac12\bigr)$.
\par
Making use of the asymptotic approximations~\eqref{sim-even-even1} and~\eqref{sim-even-even2}, we deduce
\begin{equation*}
\frac{B_{2\ell}(s)-B_{2\ell}}{\bigl(1-\frac1{2^{2\ell}}\bigr) B_{2\ell}}
=\frac{1}{1-\frac1{2^{2\ell}}}\biggl[\frac{B_{2\ell}(s)}{B_{2\ell}}-1\biggr]\sim\cos(2\pi s)-1
\end{equation*}
and
\begin{equation*}
\frac{B_{2\ell}(s)-B_{2\ell}\bigl(\frac12\bigr)}{\bigl(1-\frac1{2^{2\ell}}\bigr) B_{2\ell}}
=\frac{1}{1-\frac1{2^{2\ell}}}\biggl[\frac{B_{2\ell}(s)}{B_{2\ell}}-\frac{B_{2\ell}\bigl(\frac12\bigr)}{B_{2\ell}}\biggr]
\sim\cos(2\pi s)+1
\end{equation*}
as $\ell\to\infty$. Employing the asymptotic approximations in~\eqref{p594Entry24.11.5NIST-HB-2010} leads to
\begin{equation*}
\frac{B_{2\ell}(s)-B_{2\ell}}{\ell(2\ell-1) B_{2\ell-2}}
\sim\frac{1}{\ell(2\ell-1)} \frac{(-1)^{\lfloor \ell\rfloor-1}\frac{2(2\ell)!}{(2\pi)^{2\ell}}[\cos(2\pi s)-1]} {(-1)^{\lfloor \ell-1\rfloor-1}\frac{2(2\ell-2)!}{(2\pi)^{2\ell-2}}}
=\frac{1-\cos(2\pi s)}{2\pi^2}
\end{equation*}
and
\begin{equation*}
\frac{B_{2\ell}(s)-B_{2\ell}\bigl(\frac12\bigr)}{\bigl(1-\frac1{2^{2\ell-3}}\bigr)\ell(2\ell-1)B_{2\ell-2}}
\sim\frac{(-1)^{\lfloor \ell\rfloor-1}\frac{2(2\ell)!}{(2\pi)^{2\ell}}[\cos(2\pi s)+1]}{\ell(2\ell-1)(-1)^{\lfloor \ell-1\rfloor-1}\frac{2(2\ell-2)!}{(2\pi)^{2\ell-2}}}
=-\frac{1+\cos(2\pi s)}{2\pi^2}
\end{equation*}
as $\ell\to\infty$. Accordingly, taking $\ell\to\infty$ in~\eqref{n2infty-ineq1} and~\eqref{n2infty-ineq2} and using the above four asymptotic approximations figure out
\begin{multline}\label{n2ity-ineq1}
(-1)^{m}\biggl(1-\frac1{2^{2m}}\biggr)B_{2m}[\cos(2\pi s)-1]
<(-1)^{m}[B_{2m}(s)-B_{2m}]\\
<(-1)^{m}m(2m-1)B_{2m-2}\frac{1-\cos(2\pi s)}{2\pi^2}
\end{multline}
and
\begin{multline}\label{n2ity-ineq2}
(-1)^{m}m(2m-1) \biggl(1-\frac1{2^{2m-3}}\biggr)B_{2m-2}\frac{1+\cos(2\pi s)}{2\pi^2}\\
<(-1)^{m+1}\biggl[B_{2m}(s) -B_{2m}\biggl(\frac12\biggr)\biggr]
<\biggl(1-\frac1{2^{2m}}\biggr)|B_{2m}|[\cos(2\pi s)+1]
\end{multline}
for $m\ge2$ on $\bigl(0,\frac12\bigr)$. The right-hand side of the double inequality~\eqref{n2ity-ineq1} can be simplified as the inequality~\eqref{B2n-b5b} for $\ell\ge2$ in $s\in\bigl(0,\frac12\bigr)$, while the left-hand side of the double inequality~\eqref{n2ity-ineq1} can be rearranged as the right-hand side of the double inequality~\eqref{B2n-b5a} for $\ell\ge2$ in $s\in\bigl(0,\frac12\bigr)$. In view of the identity~\eqref{Bernoulli-polyn-numb}, the double inequality~\eqref{n2ity-ineq2} can be reformulated as the double inequality~\eqref{B2n-b5a} for $\ell\ge2$ in $s\in\bigl(0,\frac12\bigr)$.
\par
Moreover, when $\ell=1$, the double inequality~\eqref{B2n-b5a} and the reversed version of the inequality~\eqref{B2n-b5b} become
\begin{equation*}
-\frac{1+\cos(2\pi s)} {2\pi^2} -\frac1{12}
<s^2-s+\frac{1}{6}
<\frac{1+3\cos(2\pi s)}{24}
\end{equation*}
and
\begin{equation*}
s^2-s+\frac{1}{6}<\frac{\cos (2 \pi s)}{2 \pi^2}+\frac{1}{6}-\frac{1}{2 \pi^2}.
\end{equation*}
These inequalities can be elementarily verified for $s\in(0,1)\setminus\bigl\{\frac{1}{2}\bigr\}$.
\par
On the interval $\bigl(\frac12,1\bigr)$, all proofs are straightforward repetitions. The required proof is complete.
\end{proof}

\begin{rem}
The lower bounds in~\eqref{B2n-b5a} and~\eqref{B2n-b5b} are not included each other.
\end{rem}

\begin{rem}
Since 
\begin{equation*}
\cos(2\pi s)-\frac{1+\bigl(2^{2\ell}-1\bigr) \cos(2\pi s)}{2^{2\ell}} =-\frac{1-\cos(2\pi s)}{2^{2\ell}}<0
\end{equation*}
and
\begin{equation*}
\biggl(1-\frac1{2^{2\ell-1}}\biggr) \cos(2\pi s) -\frac{1+\bigl(2^{2\ell}-1\bigr) \cos(2\pi s)}{2^{2\ell}} =-\frac{1+\cos(2\pi s)}{2^{2\ell}}<0,
\end{equation*}
the upper bound given in~\eqref{B2n-b5a} is weaker than the corresponding ones in~\eqref{B2n-b3a} and~\eqref{B2n-b3b}.
\end{rem}

\begin{rem}
The double inequality~\eqref{n2ity-ineq1} bounds the differences $B_{2m}(s)-B_{2m}$ for $m\in\mathbb{N}$ on $(0,1)$. We note that the differences $B_{2m}(s)-B_{2m}$ for $m\in\mathbb{N}$ were investigated in the paper~\cite{era-905.tex}.
\end{rem}

\begin{rem}
Taking $s=\frac{1}{2}$ in~\eqref{B2n-b5b}, using the identity~\eqref{Bernoulli-polyn-numb}, and replacing $\ell$ by $\ell+1$ reveal
\begin{equation}\label{reveal-ratio-bernou-bound}
\biggl|\frac{B_{2\ell+2}}{B_{2\ell}}\biggr|<\frac{2^{2\ell+1}}{2^{2\ell+2}-1}\frac{(\ell+1)(2\ell+1)}{\pi^2}, \quad \ell\in\mathbb{N}.
\end{equation}
This inequality is weaker than the right-hand side of the double inequality~\eqref{ineq-Bernou-equiv}, but it is better than the upper bound of the double inequality~\eqref{(29)(30)-ineq-ratio-Bern}.
\end{rem}

\begin{thm}
For $\ell\in\mathbb{N}$, the double inequalities
\begin{equation*}
\frac{6s^2-6s+1}{s(2s-1) (s-1)}\le\frac{(2\ell+1) B_{2\ell}(s)}{B_{2\ell+1}(s)}<2\pi \cot (2\pi s)
\end{equation*}
and
\begin{equation*}
\frac{6s^2-6s+1}{3(1-2s)} \le-\frac{B_{2\ell}(s)}{\ell B_{2\ell-1}(s)}
<\frac{\cot(2\pi s)}{\pi}
\end{equation*}
validate in $s\in\bigl(0,\frac12\bigr)$ and reverse in $s\in\bigl(\frac12,1\bigr)$.
\end{thm}

\begin{proof}
This follows from combining the monotonicity of the sequence $\frac{(2\ell+1)B_{2\ell}(s)}{B_{2\ell+1}(s)}$ in $\ell\in\mathbb{N}_0$ and the sequence $\frac{B_{2\ell}(s)}{\ell B_{2\ell-1}(s)}$ in $\ell\in\mathbb{N}$ for fixed $s\in\bigl(0,\frac12\bigr)\cup\bigl(\frac12,1\bigr)$ with the limits~\eqref{bernoulli-ratio-n2infty} and~\eqref{limit-Ber2n=2n-1-cot} in Theorems~\ref{T5} and~\ref{T6}.
\end{proof}

\begin{thm}
The sequence $\frac{|B_{2\ell}|}{(2\ell)!}$ is logarithmically convex in $\ell\in\mathbb{N}$, while the sequence $\frac{|B_{2\ell}(\frac{1}{2})|}{(2\ell)!}$ is logarithmically concave in $\ell\in\mathbb{N}$. Consequently, the sequence $\zeta(2\ell)$ is logarithmically convex in $\ell\in\mathbb{N}$, while the sequence $\bigl(1-\frac1{2^{2\ell-1}}\bigr)\zeta(2\ell)$ is logarithmically concave in $\ell\in\mathbb{N}$.
\end{thm}

\begin{proof}
The decreasing property of the sequence $\frac{B_{2\ell}(s)}{\ell B_{2\ell-1}(s)}$ in $\ell\in\mathbb{N}$ for fixed $s\in\bigl(0,\frac12\bigr)$ and the limit~\eqref{limit-Ber2n=2n-1-cot} in Theorem~\ref{T6} imply that, for $\ell\in\mathbb{N}$ and $s\in\bigl(0,\frac12\bigr)$,
\begin{equation*}
\lim_{s\to0^+}\frac{s B_{2\ell}(s)}{\ell B_{2\ell-1}(s)}
\ge \lim_{s\to0^+}\frac{s B_{2\ell+2}(s)}{(\ell+1) B_{2\ell+1}(s)}
\ge -\lim_{s\to0^+}\frac{s\cot (2\pi s)}{\pi}
\end{equation*}
and
\begin{equation*}
\lim_{s\to(\frac{1}{2})^-}\frac{\bigl(s-\frac{1}{2}\bigr) B_{2\ell}(s)}{\ell B_{2\ell-1}(s)}
\le \lim_{s\to(\frac{1}{2})^-}\frac{\bigl(s-\frac{1}{2}\bigr) B_{2\ell+2}(s)}{(\ell+1) B_{2\ell+1}(s)}
\le -\lim_{s\to(\frac{1}{2})^-}\frac{\bigl(s-\frac{1}{2}\bigr)\cot(2\pi s)}{\pi}.
\end{equation*}
These inequalities are equivalent to, by the L'H\^opital rule and the formula~\eqref{Bernou-polyn-deriv},
\begin{equation}\label{bernoulli-ratio-pi-b1}
\frac{B_{2\ell}}{\ell(2\ell-1)B_{2\ell-2}}
\ge \frac{B_{2\ell+2}}{(\ell+1)(2\ell+1)B_{2\ell}}
\ge -\frac{1}{2\pi^2}
\end{equation}
and
\begin{equation}\label{bernoulli-ratio-pi-b2}
\frac{B_{2\ell}\bigl(\frac{1}{2}\bigr)}{\ell(2\ell-1)B_{2\ell-2}\bigl(\frac{1}{2}\bigr)}
\le \frac{B_{2\ell+2}\bigl(\frac{1}{2}\bigr)}{(\ell+1)(2\ell+1)B_{2\ell+1}\bigl(\frac{1}{2}\bigr)}
\le -\frac{1}{2\pi^2}
\end{equation}
for $\ell\in\mathbb{N}$. The left-hand sides in the inequalities~\eqref{bernoulli-ratio-pi-b1} and~\eqref{bernoulli-ratio-pi-b2} can be reformulated as
\begin{equation*}
\frac{|B_{2\ell}|/(2\ell)!}{|B_{2\ell-2}|/(2\ell-2)!}
\le\frac{|B_{2\ell+2}|/(2\ell+2)!}{|B_{2\ell}|/(2\ell)!}
\end{equation*}
and
\begin{equation*}
\frac{\bigl|B_{2\ell}\bigl(\frac{1}{2}\bigr)\bigr|/(2\ell)!}{\bigl|B_{2\ell-2}\bigl(\frac{1}{2}\bigr)\bigr|/(2\ell-2)!}
\ge\frac{\bigl|B_{2\ell+2}\bigl(\frac{1}{2}\bigr)\bigr|/(2\ell+2)!}{\bigl|B_{2\ell}\bigl(\frac{1}{2}\bigr)\bigr|/(2\ell)!}
\end{equation*}
for $\ell\in\mathbb{N}$. Consequently, the sequence $\frac{|B_{2\ell}|}{(2\ell)!}$ is logarithmically convex in $\ell\in\mathbb{N}$, while the sequence $\frac{|B_{2\ell}(\frac{1}{2})|}{(2\ell)!}$ is logarithmically concave in $\ell\in\mathbb{N}$.
\par
Using the relations~\eqref{Bernoul-zeta-rel} and~\eqref{Bernoulli-polyn-numb} in sequence leads to
\begin{equation*}
\frac{|B_{2\ell}|}{(2\ell)!}=\frac{2\zeta(2\ell)}{(2\pi)^{2\ell}}\quad\text{and}\quad
\frac{|B_{2\ell}(\frac{1}{2})|}{(2\ell)!}=2\biggl(1-\frac1{2^{2\ell-1}}\biggr)\frac{\zeta(2\ell)}{(2\pi)^{2\ell}}
\end{equation*}
for $\ell\in\mathbb{N}$. As a result, the sequence $\zeta(2\ell)$ is logarithmically convex in $\ell\in\mathbb{N}$, while the sequence $\bigl(1-\frac1{2^{2\ell-1}}\bigr)\zeta(2\ell)$ is logarithmically concave in $\ell\in\mathbb{N}$. The required proof is complete.
\end{proof}

\begin{rem}
In~\cite{Cerone-Dragomie-Math-Nachr-2009}, Cerone and Dragomir proved that the reciprocal $\frac{1}{\zeta(s)}$ is concave on $(1,\infty)$. 
In the paper~\cite{RCSM-D-21-00302.tex}, the following conclusions were proved:
\begin{enumerate}
\item
The sequence $\bigl|\frac{B_{2\ell+2}}{B_{2\ell}}\bigr|$ increases in $\ell\in\mathbb{N}_0$, and then the sequence $|B_{2\ell}|$ is logarithmically convex in $\ell\in\mathbb{N}_0$.
\item
For fixed $q\in\mathbb{N}$, the sequence
\begin{equation*}
\frac{\prod_{k=1}^{q}[2(\ell+1)+k]}{\prod_{k=1}^{q}(2\ell+k)}\biggl|\frac{B_{2\ell+2}}{B_{2\ell}}\biggr|
\end{equation*}
increases in $\ell\in\mathbb{N}$, and then the sequence $(2\ell+q)!\frac{|B_{2\ell}|}{(2\ell)!}$ is logarithmically convex in $\ell\in\mathbb{N}$.
\end{enumerate}
See also~\cite[Theorem~5 and Remark~1]{MIA-9509.tex}.
\par
The logarithmic concavity of the sequence $\bigl(1-\frac1{2^{2\ell-1}}\bigr)\zeta(2\ell)$ for $\ell\in\mathbb{N}$ is a special case of the fact in~\cite{Wang-JCCU-14-1998} which reads that the Dirichlet eta function $\eta(s)=\bigl(1-\frac{1}{2^{s-1}}\bigr)\zeta(s)$ is logarithmically concave in $s\in(0,\infty)$. This fact is the key to establish the double inequality~\eqref{ineq-Bernou-equiv} in~\cite{CAM-D-18-00067.tex}.
\par
Let $\alpha>0$ be a constant. In~\cite{Mon-Eta-Ratio.tex}, it was proved that both of the functions
\begin{equation*}
x\mapsto\binom{x+\alpha+q}{\alpha}\frac{\eta(x+\alpha)}{\eta(x)}, \quad q=0,1
\end{equation*}
increase from $(0,\infty)$ onto $(0,\infty)$, and then the functions $\Gamma(x+q)\eta(x)$ are both logarithmically convex in $x\in(0,\infty)$.
\par
By the way, the papers~\cite{Mon-Eta-Ratio.tex, RCSM-D-21-00302.tex} are siblings of the papers~\cite{Guo-Qi-Zeta-Riemann.tex, dema-D-22-00076.tex, HJMS1099250.tex}.
\end{rem}

\begin{rem}
From the right-hand sides of the inequalities~\eqref{bernoulli-ratio-pi-b1} and~\eqref{bernoulli-ratio-pi-b2}, we can deduce the double inequality
\begin{equation}\label{B2n+2/B2n<>YQ}
\frac{2^{2\ell}-2}{2^{2\ell+1}-1}\frac{(\ell+1)(2\ell+1)}{\pi^2}
<\biggl|\frac{B_{2\ell+2}}{B_{2\ell}}\biggr|
<\frac{(\ell+1)(2\ell+1)}{2\pi^2}, \quad \ell\in\mathbb{N}.
\end{equation}
\par
The lower bound in~\eqref{B2n+2/B2n<>YQ} is same as the corresponding one in~\eqref{ineq-Bernou-equiv} and is better than the lower bound in~\eqref{(29)(30)-ineq-ratio-Bern}, while the upper bound in~\eqref{B2n+2/B2n<>YQ} is better than the corresponding ones in~\eqref{(29)(30)-ineq-ratio-Bern} and~\eqref{reveal-ratio-bernou-bound}, but it is worse than the corresponding one in~\eqref{ineq-Bernou-equiv}.
\end{rem}

\begin{thm}
The double inequality
\begin{multline}\label{Final-satisfacty-doubl-ineq}
\frac{2^{2\ell+3}\bigl(2^{2\ell-1}-1\bigr)} {\bigl(2^{2\ell+2}-1\bigr)\bigl(2^{2\ell+1}-1\bigr)} \frac{(\ell+1)(2\ell+1)}{\pi^2}
\le \biggl|\frac{B_{2\ell+2}}{B_{2\ell}}\biggr|\\
\le \frac{2^{4\ell+2}}{\bigl(2^{2\ell+2}-1\bigr)\bigl(2^{2\ell+1}+1\bigr)} \frac{(\ell+1)(2\ell+1)}{\pi^2}
\end{multline}
validates for $\ell\in\mathbb{N}$. The left-hand side of the double inequality~\eqref{Final-satisfacty-doubl-ineq} also validates for the case $\ell=0$.
\end{thm}

\begin{proof}
The inequality~\eqref{f2m-2n-r-2n} can be reformulated as
\begin{equation*}
\frac{f_{2\ell}(s)}{f_{2\ell-1}(s)}\ge \frac{f_{2m}(s)}{f_{2m-1}(s)}-\frac{f_{2m}(r_{2\ell})}{f_{2m-1}(s)}>\frac{f_{2m}(s)}{f_{2m-1}(s)}
\end{equation*}
for $0<s<\frac{1}{2}$ and $m,\ell\in\mathbb{N}$ such that $m<\ell$. By the definitions in~\eqref{f2n=f2n+1}, we can transfer this inequality to
\begin{equation*}
-\frac{B_{2\ell}(s)}{\ell B_{2\ell-1}(s)}\ge -\frac{B_{2m}(s)}{m B_{2m-1}(s)}+\frac{B_{2m}(r_{2\ell})}{m}\frac{1}{B_{2m-1}(s)}
>-\frac{B_{2m}(s)}{m B_{2m-1}(s)}
\end{equation*}
for $0<s<\frac{1}{2}$ and $m,\ell\in\mathbb{N}$ such that $m<\ell$. Taking $\ell\to\infty$, using the limit~\eqref{limit-Ber2n=2n-1-cot}, and employing the fact $r_{2\ell}\to\frac{1}{4}$ as $\ell\to\infty$ in Lemma~\ref{Ostrowski-1960-Murch-lem}, we arrive at
\begin{equation}\label{pass-ineq-bernoul-ratio}
\frac{\cot(2\pi s)}{\pi}\ge -\frac{B_{2m}(s)}{m B_{2m-1}(s)}+\frac{B_{2m}\bigl(\frac{1}{4}\bigr)}{m}\frac{1}{B_{2m-1}(s)}
>-\frac{B_{2m}(s)}{m B_{2m-1}(s)}
\end{equation}
for $m\in\mathbb{N}$ and $0<s<\frac{1}{2}$. Multiplying all sides of the double inequality~\eqref{pass-ineq-bernoul-ratio} by $s>0$, letting $s\to0^+$, applying the L'H\^opital rule to the limit $\lim_{s\to0^+}\frac{s}{B_{2m-1}(s)}$ for $m\ge2$, and using the formula~\eqref{Bernou-polyn-deriv}, we acquire
\begin{equation}\label{suppl-ineq-bernou}
\frac{1}{2\pi^2}\ge\frac{1}{m(2m-1)}\biggl[-\frac{B_{2m}}{B_{2m-2}}+\frac{B_{2m}\bigl(\frac{1}{4}\bigr)}{B_{2m-2}}\biggr]
>-\frac{B_{2m}}{m(2m-1) B_{2m-2}}
\end{equation}
for $m\ge2$. Applying the identity~\eqref{abramp806-nistp590} to the case $\ell=2m$ for $m\in\mathbb{N}$ in~\eqref{suppl-ineq-bernou} and simplifying result in
\begin{equation*}
\frac{m(2m-1)}{2\pi^2}\ge\frac{2^{4m-1}+2^{2m-1}-1}{2^{4m-1}}\biggl|\frac{B_{2m}}{B_{2m-2}}\biggr|
>\biggl|\frac{B_{2m}}{B_{2m-2}}\biggr|, \quad m\ge2.
\end{equation*}
The left-hand side of this double inequality is just the right-hand side of the double inequality~\eqref{Final-satisfacty-doubl-ineq}.
\par
Multiplying all sides of the double inequality~\eqref{pass-ineq-bernoul-ratio} by $\frac{1}{2}-s>0$, letting $s\to\bigl(\frac{1}{2}\bigr)^-$, applying the L'H\^opital rule to the limit $\lim_{s\to(\frac{1}{2})^-}\frac{\frac{1}{2}-s}{B_{2m-1}(s)}$ for $m\in\mathbb{N}$, and using the formula~\eqref{Bernou-polyn-deriv}, we reveal
\begin{equation}\label{lim1/2=ineq-bernoul}
-\frac{m(2m-1)}{2\pi^2}\ge \frac{B_{2m}\bigl(\frac{1}{2}\bigr)-B_{2m}\bigl(\frac{1}{4}\bigr)}{B_{2m-2}\bigl(\frac{1}{2}\bigr)}
>\frac{B_{2m}\bigl(\frac{1}{2}\bigr)}{B_{2m-2}\bigl(\frac{1}{2}\bigr)}
\end{equation}
for $m\in\mathbb{N}$. Making use of the identities~\eqref{Bernoulli-polyn-numb} and~\eqref{abramp806-nistp590} on the left-hand side of the double inequality~\eqref{lim1/2=ineq-bernoul} demonstrates the left-hand side of the double inequality~\eqref{Final-satisfacty-doubl-ineq}. The required proof is complete.
\end{proof}

\begin{rem}
A direct computation shows
\begin{equation*}
\frac{2^{2\ell+3}\bigl(2^{2\ell-1}-1\bigr)} {\bigl(2^{\ell+1}-1\bigr) \bigl(2^{\ell+1}+1\bigr) \bigl(2^{2\ell+1}-1\bigr)} -\frac{2^{2\ell}-2} {2^{2\ell+1}-1}
=\frac{4^\ell-2}{\bigl(2^{2\ell+2}-1\bigr) \bigl(2^{2 \ell+1}-1\bigr)}>0
\end{equation*}
for $\ell\in\mathbb{N}$. Consequently, the lower bound in the double inequality~\eqref{Final-satisfacty-doubl-ineq} is better than the lower bounds in the double inequalities~\eqref{(29)(30)-ineq-ratio-Bern}, \eqref{ineq-Bernou-equiv}, and~\eqref{B2n+2/B2n<>YQ}.
\par
The upper bound in the double inequality~\eqref{Final-satisfacty-doubl-ineq} is better than the upper bounds in~\eqref{(29)(30)-ineq-ratio-Bern}, \eqref{reveal-ratio-bernou-bound}, and~\eqref{B2n+2/B2n<>YQ}, but it is worse than the upper bound in~\eqref{ineq-Bernou-equiv}.
\end{rem}

\begin{rem}
This paper is a slightly revised version of the arXiv preprints~\cite{arxiv.2405.05280v2, arxiv.2405.05280v1}.
\end{rem}

\section{Conclusions}
It is well known that the Bernoulli numbers $B_{2\ell}$ and the Bernoulli polynomials $B_\ell(s)$ are fundamental objects in mathematics and the mathematical sciences. Their equalities, identities, inequalities, explicit and closed-form formulas, recurrence relations, connections with other special numbers and polynomials such as the Euler and Stirling numbers and polynomials, integral representations, determinantal expressions, generalizations, analogues, and related topics have been extensively and insightfully investigated by numerous mathematicians over the past centuries. In particular, regarding inequalities for the Bernoulli numbers $B_{2\ell}$, we summarize them as follows:
\begin{enumerate}
\item
Inequalities that bound the Bernoulli numbers $B_{2\ell}$; see the brief review in the first section of~\cite{CAM-D-18-00067.tex} and the papers~\cite{era-905.tex, Filomat1193.tex, RCSM-D-21-00302.tex}.
\item
Inequalities that bound the ratios $\frac{B_{2\ell+2}}{B_{2\ell}}$ of the Bernoulli numbers $B_{2\ell}$; see the initial paper~\cite{CAM-D-18-00067.tex} and subsequent works~\cite{MIA-9509.tex, Filomat1193.tex, Rem-Bern-Ratio-Ineq.tex, Yang-Tian-JCAM-2020, Zhu-RACSAM-2020}.
\end{enumerate}
\par
In this work, we established the monotonicity of the four ratios
\begin{equation}\label{four-ratio-bernoulli-polyn}
\frac{B_{2\ell-1}(s)}{B_{2\ell+1}(s)}, \quad 
\frac{B_{2\ell}(s)}{B_{2\ell+1}(s)}, \quad 
\frac{B_{2m}(s)}{B_{2\ell}(s)}, \quad 
\frac{B_{2\ell}(s)}{B_{2\ell-1}(s)}
\end{equation}
of the Bernoulli polynomials $B_\ell(s)$, derived several new inequalities, and recovered a number of known inequalities for the Bernoulli polynomials $B_\ell(s)$, the Bernoulli numbers $B_{2\ell}$, and their ratios such as $\frac{B_{2\ell+2}}{B_{2\ell}}$.
\par
Compared with the interesting double inequalities in~\cite{CAM-D-18-00067.tex, Rem-Bern-Ratio-Ineq.tex, RCSM-D-21-00302.tex, Yang-Tian-JCAM-2020, Zhu-RACSAM-2020} for bounding the ratios $\frac{B_{2\ell+2}}{B_{2\ell}}$ of the Bernoulli numbers $B_{2\ell}$, the monotonic properties of the ratios in~\eqref{four-ratio-bernoulli-polyn} for the Bernoulli polynomials $B_\ell(s)$ are more interesting and creative.
\par
The origin of this paper is~\cite[Proposition~1]{Bernouli-No-Tail.tex}, namely Qi's problem and its solutions. This demonstrates that new and curious problems may arise from the study of normalized remainders of Maclaurin power series expansions of analytic functions. The concept of normalized remainders was initially introduced by Qi and has been reviewed and surveyed in~\cite{GM-VG-QiF-Ch.tex}. Consequently, this new notion of normalized remainders is undoubtedly significant, and its study is expected to continue developing in the future.

\section{Declarations}

\paragraph{\bf Authors' Contributions}
All authors contributed equally to the manuscript and read and approved the final manuscript. All authors reviewed the manuscript.

\paragraph{\bf Funding}
The second author was partially supported by the Natural Science Foundation of Inner Mongolia Autonomous Region (Grant No.~2025QN01041) and by the Youth Project of Hulunbuir City for Basic Research and Applied Basic Research (Grant No.~GH2024020).

\paragraph{\bf Institutional Review Board Statement}
Not applicable.

\paragraph{\bf Informed Consent Statement}
Not applicable.

\paragraph{\bf Ethical Approval}
The conducted research is not related to either human or animal use.

\paragraph{\bf Availability of Data and Material}
Data sharing is not applicable to this article as no new data were created or analyzed in this study.

\paragraph{\bf Acknowledgements}
Not applicable.

\paragraph{\bf Competing Interests}
The authors declare that they have no conflict of competing interests.

\paragraph{\bf Use of AI tools declaration}
The authors declare they have not used Artificial Intelligence (AI) tools in the creation of this article.

\end{document}